\documentclass[11pt]{amsart}
\usepackage[T1]{fontenc}
\usepackage{lmodern,amsmath,amssymb,amsthm,mathtools,microtype,needspace}
\usepackage[margin=1in]{geometry}
\usepackage[colorlinks=true,linkcolor=blue,citecolor=blue,urlcolor=blue]{hyperref}
\numberwithin{equation}{section}
\newtheorem{theorem}{Theorem}[section]
\newtheorem{proposition}[theorem]{Proposition}
\newtheorem{lemma}[theorem]{Lemma}

\theoremstyle{remark}\newtheorem{remark}[theorem]{Remark}
\newcommand{\R}{\mathbb R}\newcommand{\Z}{\mathbb Z}\newcommand{\HH}{\mathbb H}
\newcommand{\secg}{\operatorname{sec}}\newcommand{\Hess}{\operatorname{Hess}}
\newcommand{\PD}{\operatorname{PD}}
\newcommand{\sig}{\operatorname{sign}}\newcommand{\Ad}{\operatorname{Ad}}
\newcommand{\Lie}{\mathcal L}

\title[Positive curvature and non-isometric circle actions]{Positive sectional curvature and non-isometric circle actions on a family of eleven-spheres}
\author[S. Deng]{Shaoqiang Deng}
\address[Shaoqiang Deng]{School of Mathematical Sciences, Nankai University, Tianjin, China}
\author[Z. Hu]{Zhiguang Hu}
\address[Zhiguang Hu]{School of Mathematical Sciences, Tianjin Normal University, Tianjin, China}
\author[H. Zhang]{Hui Zhang}
\address[Hui Zhang]{School of Mathematical Sciences, Southeast University, Nanjing, China}
\date{Research manuscript, September 26, 2026}
\subjclass[2020]{53C20, 57R55, 57S15}
\keywords{Positive sectional curvature, exotic sphere, circle action, connection metric, gluing}
\begin{document}
\begin{abstract}
We study the construction of positively curved Riemannian metrics by
non-isometric circle actions. For a family of homotopy eleven-spheres
whose Eells--Kuiper invariants form the even subgroup of
$\Z/992$, we construct, on each member, a smooth background metric $q$
and three effective circle actions with generators $W_1,W_2,W_3$
such that the metric determined by
$g^{-1}=q^{-1}+\sum_{a=1}^3W_a\otimes W_a$ has positive sectional
curvature. Each action is non-isometric for every partial metric, including its
incoming metric and the final metric. We first describe the sphere by gauge transformations of the
quaternionic Hopf bundle. We then construct compatible metrics on two
disks and smooth their inverse metrics while preserving the action
formula. A local conjugation makes the circle actions non-isometric.
For each fixed member, we obtain an explicit positive lower bound
$2^{-54}(1+M_{0,k}+M_{1,k})^{-28}$, where $M_{0,k}$ and $M_{1,k}$ are
norms of the curvature and its first covariant derivative for its fixed
connection. The bound may depend on the member of the family.
\end{abstract}
\maketitle

\section{Introduction}\label{sec:introduction}
The existence of Riemannian metrics with positive sectional curvature is
a basic problem in differential geometry. For a manifold homeomorphic to
a sphere, the problem also involves its differentiable structure. A
homotopy sphere may carry an exotic smooth structure, and a metric on the
standard sphere cannot simply be transported to that structure. In a
construction by two disks, this distinction is recorded by the
diffeomorphism used to identify their boundaries.

Group actions provide useful ways to construct and deform metrics. When
an action is isometric, a quotient metric can be studied by the curvature
formula for a Riemannian submersion. For a non-isometric action, one first
has to specify how the action enters the metric. In this paper we use
the inverse-metric expression
\[
 g^{-1}=q^{-1}+W\otimes W.
\]
For every Riemannian metric $q$ and smooth vector field $W$, the right
side is positive definite on covectors. It therefore defines a new metric,
whether or not $W$ is Killing. If $W$ generates a circle action, this
is a metric deformation associated with that action.

There are two separate questions. The first is whether the resulting
metric has positive sectional curvature. The second is whether the
fields in the formula generate genuine periodic actions and are
non-Killing for the corresponding metrics. Positivity of an inverse metric
does not imply positivity of sectional curvature. Also, an arbitrary
change in the length of a periodic vector field may destroy periodicity.
Both issues must be addressed in the construction.

Our construction applies to a family of oriented homotopy eleven-spheres.
Their smooth types are distinguished by the Eells--Kuiper invariant,
with the normalization in \eqref{eq:mu}. The main result is as follows.

\Needspace{17\baselineskip}
\begin{theorem}\label{thm:main}
For each $k\in12\Z$, let $\Sigma_k^{11}$ be the oriented homotopy
eleven-sphere obtained by the bundle and fiber-surgery construction in
Section~\ref{sec:topology}. There exist a smooth background metric
$q_k$ and three effective smooth circle actions of period $2\pi$, with
generators $W_{1,k},W_{2,k},W_{3,k}$, such that
\begin{equation}\label{eq:main}
 g_k^{-1}=q_k^{-1}+\sum_{a=1}^3 W_{a,k}\otimes W_{a,k}
\end{equation}
defines a metric of positive sectional curvature. For every subset $S\subset\{1,2,3\}$ and every $a=1,2,3$, define
\[
 (g_{S,k})^{-1}=q_k^{-1}+\sum_{b\in S}W_{b,k}\otimes W_{b,k}.
\]
Then $\Lie_{W_{a,k}}g_{S,k}\ne0$. In particular, each action is
non-isometric for its incoming metric, in any order of addition, and
for the final metric.

The parameters can be chosen so that, at every point and on every
two-plane,
\begin{equation}\label{eq:mainexplicit}
 \secg_{g_k}>\frac1{2^{54}(1+M_{0,k}+M_{1,k})^{28}}>0.
\end{equation}
Here $M_{0,k}=\sup|\Omega_k|$ and $M_{1,k}=\sup|D\Omega_k|$ are
associated with the fixed southern connection used for $\Sigma_k$ in
Section~\ref{sec:disks}; its base metric has curvature one. As $k=12m$
varies, $\mu(\Sigma_k)=2m/992$ runs through all $496$ elements of the
even subgroup of $\Theta_{11}\cong\Z/992$. The background is defined
from the connection and the disk profiles before the final circle
actions are chosen. The lower bound is positive for every fixed $k$;
this theorem does not assert a single uniform bound for all $k$.
\end{theorem}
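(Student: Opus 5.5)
The plan follows the order announced in the abstract. First, topology. We realize $\Sigma_k^{11}$ as the union of two $8$-disk bundles over $S^4$ (or two $11$-disks) glued by a gauge transformation of the quaternionic Hopf bundle $S^7\to S^4$ twisted by $k$. Fiber surgery turns the total space into a homotopy sphere. We compute $\mu(\Sigma_k)$ with the Eells--Kuiper formula \eqref{eq:mu}, using the signature and $p_1^2$ of an explicit coboundary obtained by plumbing or as a disk bundle. For $k=12m$ this should give $2m/992$, so that as $m$ runs mod $496$ we get the even subgroup.

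Second, the background metric. On each half we take a connection metric. The base is $S^4$ with curvature one. The fibers are round $S^3$ or disks with a warped profile $f(r)$ that is concave and ends with a product collar. The connection is the standard one on the northern half and the fixed southern connection $\omega_k$ with curvature $\Omega_k$ on the other half. We use the Sp(1) fiber action: $W_{1,k},W_{2,k},W_{3,k}$ are the generators of the right action by the three coordinate circles $e^{it},e^{jt},e^{kt}$. Each is periodic of period $2\pi$ and effective. They commute with the gauge gluing because the gluing acts on the left, so they are globally defined.

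The inverse-metric formula $g^{-1}=q^{-1}+\sum W_a\otimes W_a$ shrinks the fibers. Concretely, it replaces the fiber metric $h$ by $(h^{-1}+\mathrm{Id})^{-1}$. On the connection part this is a Cheeger-type deformation, so it preserves the horizontal/vertical splitting. The O'Neill formula then gives
\[
\secg\ge \text{base term}-C\,\epsilon(M_0+M_1)\|\cdot\|^2,
\]
plus the fiber term and $A$-tensor terms. We choose the fiber scale $\epsilon\sim(1+M_0+M_1)^{-p}$ small and the profile parameters polynomially in $M_0+M_1$. Tracking the exponents gives the bound $2^{-54}(1+M_0+M_1)^{-28}$. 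Smoothing across the gluing region is done on $q^{-1}$, which is linear in the formula. Because the smoothing is done there, the action formula is preserved.

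Third, non-isometry. As constructed, the Sp(1) fields would be Killing for $q$. We therefore modify $q$ by a local conjugation. We pick a small ball $U$, disjoint from the most delicate region, and a diffeomorphism $\phi$ supported in $U$. We replace $q$ there by one for which $\mathcal L_{W_a}q\neq0$, chosen so that $\phi$ does not commute with any of the flows. Equivalently, we conjugate the actions by $\phi$ while keeping $q$ fixed. This preserves periodicity and effectiveness. For each subset $S$ and each $a$, $\mathcal L_{W_a}g_S=0$ would force an identity among $\mathcal L_{W_a}q^{-1}$ and the brackets $[W_a,W_b]$. A generic $C^2$-small choice of $\phi$ violates all $8\cdot3$ such identities at some point of $U$. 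Since positive curvature is an open condition and $\phi$ is $C^2$-close to the identity, the curvature bound survives with a factor $2$ loss.

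The main obstacle is the second step. We need the curvature estimate to be uniform across the neck where the southern connection, the profile transition and the smoothing of $q^{-1}$ all interact, with mixed curvatures $\secg(X,V)$ controlled by $D\Omega$. The first thing to try is to make the neck long, of length $\sim(1+M_0+M_1)^{a}$, so that all derivatives of the interpolation are small. The explicit exponent $28$ then comes from balancing that length against $\epsilon$.
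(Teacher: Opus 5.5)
Your proposal has a genuine gap, and it is in Step 2, where the theorem is actually proved.

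\textbf{The two halves and the seam.} You never specify a decomposition of $\Sigma_k^{11}$ into pieces that carry the metrics you describe. Two $D^8$-bundles over $S^4$ would be $12$-dimensional, and you exhibit no fibration of the $11$-dimensional halves over $S^4$ with the stated fibres. Nothing checks that the two boundary metrics agree under the clutching map.

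As stated, the construction also cannot be strictly positive:
\begin{itemize}
\item If the profile ends in a product collar, every plane containing the normal direction there is flat. Adding $\sum_aW_a\otimes W_a$, which is tangential and independent of the normal coordinate, keeps the collar a product.
\item Positivity across a seam needs non-product boundary data whose outward second fundamental forms have nonnegative sum. That condition never appears in your plan.
\item Your O'Neill estimate ``base term $-C\epsilon(M_0+M_1)$'' has no positive term on vertizontal planes. With constant fibre radius their curvature is $|\mathcal A_XU|^2$, which vanishes wherever $\Omega$ does (near flat poles, and everywhere on a flat model). A long neck does not create such a term.
\end{itemize}

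The paper supplies exactly these missing ingredients:
\begin{itemize}
\item It uses the equivariant clutching $J_\beta(z)=\rho(\beta(z))z$ on $S(\R^3\oplus\HH^2)$. Each $D^{11}$ is realized as a star quotient of an $S^3$-bundle over a cap of $S^{11}$.
\item In the south it takes $r_S^2=\varepsilon e^{\varepsilon\phi}$ with $\phi=-A\cos t$. On a cap of radius less than $\pi/2$ this gives $-\Hess\phi\ge\Lambda g_0$ with $\Lambda>M_0^2+2M_1^2$. This concavity produces the positive mixed term, and it is impossible on a closed base such as $S^4$ (look at the minimum of $\phi$).
\item In the north it uses a flat doubly warped model that is positive only on star-horizontal planes.
\item The boundary metrics are $F_a^2h_{S(V)}+r_a^2h_{S^3}$ on both sides, and $B_N+J_\beta^*B_S=(\mu_N-\mu_S)|X|^2>0$. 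This sign is what makes the retained jump term $\tfrac12JHDH$ nonnegative in the seam estimate.
\end{itemize}

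Smoothing also does not preserve the formula merely because it is ``linear in $q^{-1}$''. The point is that the smoothing operator fixes $T$: $T$ is tangential and $t$-independent on the collar, so $S_\zeta(b^{-1}+T)=S_\zeta(b^{-1})+T$. Hence the smoothed background plus $T$ is the smoothing of the comparison metric, which is the metric whose curvature you can control. Your enlarged-fibre background does match the paper's $R_i^2=r_i^2/(1-r_i^2)$.

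\textbf{The other steps are asserted rather than proved.}
\begin{itemize}
\item \emph{Topology.} A single gauge transformation will not do; you need a family $f:S^3\to\operatorname{Gauge}(S^7\to S^4)$. It comes from a bundle over $S^4\times S^4$ with $c_2=x+ky$, which exists because $k[\iota,\iota]=0$ in $\pi_6(S^3)\cong\Z/12$. That is where $k\in12\Z$ enters.
\item \emph{Eells--Kuiper invariant.} The filling is $D(\xi_k)$ plus an $8$-handle. It is spin with $\sig(Z)=0$ and $\bar p_1=-2u$, so $\langle\bar p_1p_1^2\rangle=-16k$, $\langle\bar p_1p_2\rangle=-4k$, and $\mu=k/5952$. ``Signature and $p_1^2$'' are not the inputs of the twelve-dimensional formula, and ``should give'' is not a computation.
\item \emph{The exponent $28$.} It does not come from balancing a neck length. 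In the paper, $\kappa_*/4>\varepsilon^2\delta^4/64$ with $\delta=1/(16\mathcal B)$ and $\varepsilon=2^{-16}\mathcal B^{-12}$. The factor $\delta^4$ comes from the northern radial coefficient $a_*$, and $\varepsilon^2$ from the graph factor $H_*^{-2}$.
\item \emph{Non-isometry.} Conjugating the actions is the paper's idea, but ``generic'' needs an explicit witness. The paper's quadratic shear $Z=\tfrac12b(x)G_{ab}x^ax^b\partial_{x^4}$ at a free-orbit point gives $(\Lie_{W_{a,\tau}}g_{S,\tau})_p(U_a+\mathbf 1_{a\in S}v_a,N)=\tau$ for all $S$ and $a$ at once.
\item \emph{Curvature after conjugation.} $C^2$-closeness of $\phi$ only makes $g$ $C^1$-close, which does not control curvature. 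You need $C^3$ control of $\phi$, or the paper's smooth family in $\tau$ with a mean-value bound.
\end{itemize}
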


For each fixed $k$, the three actions are subgroups of a common
conjugate of an $S^3$ action. They need not commute. The non-isometry assertion holds for every partial sum, including the
initial background and the final metric. The constants $M_{0,k},M_{1,k}$ are finite for every
fixed smooth connection, so \eqref{eq:mainexplicit} is a strictly
positive lower bound on all points and two-planes of that particular
metric, although its value has not been evaluated numerically.

We now explain the proof. First, for each $k\in12\Z$, we choose a
principal $S^3$ bundle over $S^4\times S^4$ with second Chern class
$x+ky$. Its transition
functions give a family of gauge transformations of the Hopf bundle
$S^7\to S^4$. Surgery on an $S^7$ fiber gives a homotopy eleven-sphere.
The gauge family describes its attaching diffeomorphism, and the
characteristic numbers of the surgery filling identify its smooth type.

Second, we construct compatible metrics on the two disks. The southern
metric comes from a connection metric with a small variable fiber
radius. A concavity condition supplies positive mixed curvature.
On the northern side we use a flat connection and a different radial
profile. We estimate the curvature on every plane horizontal for the
quotient map. The boundary metrics agree under the prescribed attaching
map, and the outward second fundamental forms have positive definite sum.

\paragraph{Relation to He--Liu--Yau.}
The two-disk architecture in this paper is closely related to the one
used by He, Liu and Yau \cite{HLY} to construct positively curved
metrics on homotopy seven-spheres. In particular, we use the same broad
geometric pattern: a connection metric with a variable fibre radius on
one disk, a flat-connection radial model on the other, compatible
boundary metrics with a favorable sum of outward second fundamental
forms, and a positive-curvature gluing theorem. We do not present this
two-disk architecture as new. Our work is to carry it through for the
quaternionic bundle and fibre-surgery data defining the eleven-dimensional
family: we prove the required dimension-specific curvature estimates,
identify the resulting smooth structures, and verify the boundary data
for the prescribed attaching maps. We then add a further metric
representation, $g^{-1}=q^{-1}+\sum_a W_a\otimes W_a$, which survives
our explicit smoothing, and construct periodic circle actions that are
non-isometric for every partial metric. The gluing theorem used here is
that of Reiser and Wraith \cite{RW}; the commuting-action description
is also related to Speran\c ca \cite{Speranca}. Thus the present paper
uses and develops the He--Liu--Yau two-disk framework in a different
dimensional and topological setting, while adding the action-preserving
smoothing and non-isometric realization results.

The inverse metric is useful for this last purpose. We enlarge the
fiber radii to define a background on each disk. Adding three circle
squares to its inverse recovers the positive comparison metric.
The sum of those squares is independent of the normal coordinate in a
common collar. It therefore commutes with convolution of the inverse
metric. This gives a global positive comparison metric and preserves
the exact formula.

Finally, a Gram-matrix shear conjugates the three actions by the flow
of a vector field supported in a small coordinate ball. Conjugation
preserves their periods and group laws. A second-derivative calculation
gives an exact nonzero Lie derivative for every partial metric. A local
coordinate curvature identity proved in Section~\ref{sec:circles} then
retains a positive curvature bound on the whole closed manifold.

The two-disk comparison metric supplies the positive curvature margin.
The conjugation has a separate role: it realizes that positive metric
through periodic circle actions whose generators are non-Killing for
every partial sum, without losing curvature positivity. The background
is part of our construction; it is not asserted to be the
Euclidean-induced metric on an algebraic model of the sphere.

The paper is arranged as follows. Section~\ref{sec:preliminaries}
contains the metric identities and curvature conventions.
Section~\ref{sec:topology} constructs and identifies the sphere.
The connection estimate is proved in Section~\ref{sec:connection},
and the compatible disk metrics are constructed in
Section~\ref{sec:disks}. In Section~\ref{sec:background}, we define
the background and smooth its inverse. The non-isometric actions and
the main theorem are treated in Section~\ref{sec:circles}.
Section~\ref{sec:quantitative} gives definite parameters and the
lower bound \eqref{eq:mainexplicit}. We finish by identifying the
Brieskorn model and discussing the scope of the construction.

\section{Preliminaries}\label{sec:preliminaries}
We first explain the deformation by circle squares and fix the conventions
used in the curvature calculations. All geometric data are smooth unless
otherwise stated.

\subsection{Curvature and tensor norms}
Our curvature convention is
\[
 R(X,Y)Z=\nabla_X\nabla_YZ-\nabla_Y\nabla_XZ-\nabla_{[X,Y]}Z.
\]
For independent tangent vectors $X,Y$, put
\[
 \mathcal K(X\wedge Y)=g(R(X,Y)Y,X),\qquad
 \secg(X,Y)=\frac{\mathcal K(X\wedge Y)}{|X\wedge Y|_g^2}.
\]
Thus positive sectional curvature means positivity of $\mathcal K$ on
nonzero decomposable bivectors. It does not require the curvature
operator to be positive on every bivector.

We identify $S^3$ with the unit quaternions. Its Lie algebra
$\operatorname{Im}\HH$ has the Euclidean inner product $Q$, so the
bi-invariant group metric has curvature one. For an oriented orthonormal
basis, $[e_a,e_b]=2\epsilon_{abc}e_c$. Each subgroup
$s\mapsto\exp(se_a)$ has period $2\pi$. The notation $h_{S^j}$
denotes the unit round metric on $S^j$.

Tensor norms are induced Hilbert--Schmidt norms, except where an
operator norm is specified. For an adjoint-bundle-valued two-form,
\[
 |\Omega|^2=\sum_{i<j,a}(\Omega_{ij}^a)^2,\qquad
 |D\Omega|^2=\sum_{k,i<j,a}((D_k\Omega)_{ij}^a)^2.
\]
Hence ordered component arrays have norms $\sqrt2|\Omega|$ and
$\sqrt2|D\Omega|$. Exterior powers carry their induced norms.
For a fixed metric $h$, the $C^2(h)$ norm of a tensor is the maximum
of the suprema of its norm and those of its first two covariant
derivatives.

\subsection{Adding a circle square to an inverse metric}
The inverse of a metric $q$ is a positive bilinear form on covectors.
For a vector field $W$, the tensor $W\otimes W$ is defined by
\[
 (W\otimes W)(\alpha,\beta)=\alpha(W)\beta(W).
\]
The following elementary identity gives a covariant expression for the
deformation.

\begin{lemma}\label{lem:rankone}
For every Riemannian metric $q$ and smooth vector field $W$, the
formula $g^{-1}=q^{-1}+W\otimes W$ defines a smooth Riemannian metric,
and
\begin{equation}\label{eq:rankone}
 g=q-\frac{q(W,\cdot)\otimes q(W,\cdot)}{1+q(W,W)}.
\end{equation}
\end{lemma}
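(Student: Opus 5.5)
The plan is to verify \eqref{eq:rankone} by the Sherman--Morrison identity, phrased invariantly, and then read off smoothness and positivity.

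First, positivity on covectors. For a covector $\alpha\ne0$,
\[
 (q^{-1}+W\otimes W)(\alpha,\alpha)=q^{-1}(\alpha,\alpha)+\alpha(W)^2>0,
\]
because $q^{-1}$ is positive definite. So $q^{-1}+W\otimes W$ is a smooth, symmetric, positive definite form on $T^*M$. Its inverse is therefore a smooth Riemannian metric $g$; smoothness follows because inversion of nondegenerate symmetric forms is smooth.

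It remains to identify $g$ with the right side of \eqref{eq:rankone}, which I denote $\tilde g$. Write $\flat$ for the musical isomorphism of $q$, put $w=q(W,\cdot)=W^\flat$, and let $c=q(W,W)\ge0$. Then
\[
 \tilde g=q-\frac{w\otimes w}{1+c}
\]
is smooth, since $1+c\ge1$. Regard $\tilde g$ as a map $T M\to T^*M$ and $G=q^{-1}+W\otimes W$ as a map $T^*M\to TM$. I will check directly that $\tilde g\circ G=\mathrm{id}$ on $T^*M$.

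For a covector $\alpha$ we have $G\alpha=\alpha^\sharp+\alpha(W)W$. Applying $\tilde g$ gives
\[
 \tilde g(G\alpha)=\alpha+\alpha(W)w-\frac{w\bigl(\alpha(W)+\alpha(W)c\bigr)}{1+c}.
\]
Here $w(\alpha^\sharp)=q(W,\alpha^\sharp)=\alpha(W)$ and $w(W)=c$. The last term equals $\alpha(W)w$, so it cancels the middle term and leaves $\alpha$. Hence $\tilde g=G^{-1}=g$. Since $g$ is the inverse of a positive definite form, this also shows that $\tilde g$ is positive definite.

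If one wants positivity of $\tilde g$ independently, Cauchy--Schwarz gives
\[
 \tilde g(X,X)=|X|_q^2-\frac{q(W,X)^2}{1+c}\ge |X|_q^2\Bigl(1-\frac{c}{1+c}\Bigr)>0
\]
for $X\ne0$.

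There is no real obstacle here. The only care needed is to keep the index conventions straight, that is, which side of the pairing $W\otimes W$ acts on covectors, and to note that $1+q(W,W)$ never vanishes. The latter holds even at zeros of $W$, so the formula is globally smooth.
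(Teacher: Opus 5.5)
Your proof is correct and follows the paper's route. You show that $q^{-1}+W\otimes W$ is positive on covectors and then verify the Sherman--Morrison identity directly; the paper writes that identity as $(I+ww^T)\bigl(I-ww^T/(1+w^Tw)\bigr)=I$ in a $q$-orthonormal frame, whereas you check $\tilde g\circ G=\mathrm{id}$ basis-free via $\flat$ and $\sharp$, and both note that $1+q(W,W)\ge1$ keeps \eqref{eq:rankone} smooth at zeros of $W$.
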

\begin{proof}
For a nonzero covector $\alpha$,
\[
 (q^{-1}+W\otimes W)(\alpha,\alpha)
 =q^{-1}(\alpha,\alpha)+\alpha(W)^2>0.
\]
At a fixed point choose a $q$-orthonormal basis, and let $w$ be the
column vector of $W$. Then
\[
 (I+ww^T)\left(I-\frac{ww^T}{1+w^Tw}\right)=I.
\]
This proves \eqref{eq:rankone}. Its denominator is positive, so it is
smooth even at the zeros of $W$.
\end{proof}

For several vector fields we apply this operation successively. Their
squares add in the inverse metric. The final metric is independent of
the order of addition, but the non-Killing condition at an incoming
stage depends on that order. We shall verify this condition separately
for all three steps.

An action generated by $W$ is isometric for $q$ precisely when
$\Lie_Wq=0$. We use the term \emph{realization by circle actions}
for an inverse-metric formula with genuinely periodic generators.
This definition makes sense for non-isometric actions. At this stage
we are not taking a Riemannian quotient of a non-invariant product metric.

\subsection{Submersions and the boundary convention}
For a principal $S^3$ bundle $\pi:P\to B$, a connection $\omega$,
a base metric $h$ and a positive function $r$, write
\[
 G=\pi^*h+r^2Q(\omega,\omega).
\]
The connection-horizontal space and the principal-vertical space are
orthogonal. The principal fibers have round metrics of radius $r$.

Our source bundles also carry a second free $S^3$ action, commuting
with the principal one. If it preserves $G$, its quotient has a
Riemannian metric. The quotient-horizontal spaces are orthogonal to the
orbits of this second action; they generally differ from the
connection-horizontal spaces. This distinction is essential for the
northern disk.

For orthonormal quotient-horizontal vectors $\widetilde X,\widetilde Y$,
O'Neill's formula \cite{ONeill} gives
\[
 \secg_{\mathrm{quotient}}(d\varpi\widetilde X,d\varpi\widetilde Y)
 =\secg_G(\widetilde X,\widetilde Y)
      +3|\mathcal A_{\widetilde X}\widetilde Y|^2
 \ge\secg_G(\widetilde X,\widetilde Y).
\]
It is therefore sufficient to prove a lower bound on all lifted
two-planes.

For a boundary with outward unit normal $n$, our convention is
$\mathrm{II}(X,Y)=g(\nabla_Xn,Y)$. In a normal coordinate with metric
$dt^2+h(t)$ and outward normal $\partial_t$, this is $h'(t)/2$.
For gluing we compare the sum of the outward second fundamental forms
under the prescribed boundary identification.

\section{The eleven-sphere family}\label{sec:topology}
We need a two-disk description and an identification of its smooth
type. A family of gauge transformations supplies the boundary map.
We then compute the characteristic numbers of a filling associated
with the same surgery.
\subsection{Principal bundles over a product}
Orient $B=S^4_x\times S^4_y$ and choose generators $x,y\in H^4(B;\Z)$ with
\[
 x^2=y^2=0,\qquad \langle xy,[B]\rangle=1.
\]
For a quaternionic line bundle, use its complex orientation as a complex
rank-two bundle. Its real characteristic classes then satisfy
\begin{equation}\label{eq:lineclasses}
 e=c_2,\qquad p_1=-2c_2,\qquad p_2=c_2^2.
\end{equation}

\begin{lemma}\label{lem:bundle}
For every $k\in12\Z$, there is a smooth principal $\mathrm{SU}(2)$ bundle
$E_k\to B$ whose associated quaternionic line bundle $\xi_k$ has
$c_2(\xi_k)=x+ky$.
\end{lemma}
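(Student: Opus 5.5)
The plan is to use obstruction theory for maps into the classifying space $BSU(2)\simeq\HH P^\infty$, applied to the standard CW structure $B=(S^4_x\vee S^4_y)\cup_{w}e^8$, where $w\in\pi_7(S^4\vee S^4)$ is the Whitehead product $[\iota_x,\iota_y]$ of the two inclusions. Isomorphism classes of principal $SU(2)$ bundles over $B$ correspond to homotopy classes $[B,\HH P^\infty]$. The class $c_2$ is the pullback of the generator $u\in H^4(\HH P^\infty;\Z)$, with the sign fixed by the complex orientation convention of \eqref{eq:lineclasses}. So it suffices to build a map $f:B\to\HH P^\infty$ with $f^*u=x+ky$.

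\textbf{Step 1: the 4-skeleton.} On the 4-skeleton $S^4_x\vee S^4_y$ we take $f_4=(\iota)\vee(k\iota)$, where $\iota:S^4=\HH P^1\hookrightarrow\HH P^\infty$ represents a generator of $\pi_4(\HH P^\infty)\cong\Z$. Choose the sign of $\iota$ so that $\iota^*u$ is the positive generator. Since restriction $H^4(B)\to H^4(S^4\vee S^4)$ is an isomorphism, any extension $f$ of $f_4$ satisfies $f^*u=x+ky$.

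\textbf{Step 2: the obstruction.} The map $f_4$ extends over the $8$-cell if and only if
\[
f_{4*}w=[\iota,k\iota]=k[\iota,\iota]\in\pi_7(\HH P^\infty)
\]
vanishes, using bilinearity of the Whitehead product. Under the isomorphism $\pi_7(\HH P^\infty)\cong\pi_6(S^3)\cong\Z/12$ given by looping $\HH P^\infty\simeq BS^3$, the Whitehead product of two elements of $\pi_4$ corresponds, up to sign, to the Samelson product in $\pi_6(S^3)$ of their adjoints in $\pi_3(S^3)$. The Samelson product $\langle 1,1\rangle$ of the identity of $S^3$ is the commutator map $S^3\wedge S^3\to S^3$, and it generates $\pi_6(S^3)\cong\Z/12$. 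This is the classical fact that it has order exactly $12$; only the fact that its order divides $12$ is needed. Hence $f_{4*}w=k\langle1,1\rangle=0$ whenever $12\mid k$, and $f_4$ extends to $f$. This also explains the hypothesis $k\in12\Z$, since for the general class $ax+by$ the condition is $12\mid ab$.

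\textbf{Step 3: smoothing.} The pullback $E_k=f^*ESU(2)$ is a topological principal $SU(2)$ bundle over the smooth manifold $B$. By the standard fact that continuous principal bundles over smooth manifolds with Lie structure group are isomorphic to smooth ones (approximate the classifying map, or the transition functions, by smooth maps), $E_k$ may be taken smooth. Its associated quaternionic line bundle $\xi_k$ then has $c_2(\xi_k)=f^*u=x+ky$.

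The main obstacle is Step 2, namely the identification of the obstruction with a multiple of the Samelson product and its order in $\pi_6(S^3)$. I will quote the classical computation for this rather than reprove it. An alternative that the later sections may prefer is an explicit description by clutching. Take the bundle over $S^4_x\times D^4$ pulled back from the Hopf bundle, and glue it to the trivial bundle over $S^4_x\times D^4$ along $S^4_x\times S^3$ by a family of gauge transformations of $S^7\to S^4$ of degree $k$ in the $S^3$ direction. The existence of such a family reduces to the same vanishing of $k\langle 1,1\rangle$. This second form is what will supply the gauge family used for the attaching map.
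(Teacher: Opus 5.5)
Your Steps 1--3 are essentially the paper's proof: the same CW structure $(S^4\vee S^4)\cup_w e^8$, the map $\iota\vee k\iota$ into $B\mathrm{SU}(2)$, the obstruction $k[\iota,\iota]\in\pi_7(B\mathrm{SU}(2))\cong\pi_6(S^3)\cong\Z/12$ that vanishes when $12\mid k$, and smoothing of the resulting bundle (the Samelson-product identification is extra, since the exponent $12$ of the group already suffices). One small slip in your closing aside: a family of gauge transformations of $S^7\to S^4$ can only glue the Hopf pullback over $S^4_x\times D^4$ to another copy of the Hopf pullback, not to the trivial bundle, because the restrictions to $S^4_x\times\{\mathrm{pt}\}$ over the two hemispheres must be isomorphic; this is how the gauge family is set up in Section~\ref{sec:topology}.
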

\begin{proof}
The CW structure of $B$ is obtained from $S^4\vee S^4$ by attaching an
$8$-cell along the Whitehead product of the two inclusions. Let
$\iota\in\pi_4(B\mathrm{SU}(2))\cong\Z$ classify the Hopf bundle. Specify
$\iota$ and $k\iota$ on the two wedge summands. The obstruction to extension
is $[\iota,k\iota]=k[\iota,\iota]$ in
\[
 \pi_7(B\mathrm{SU}(2))=\pi_6(S^3)\cong\Z/12.
\]
The last group is the classical homotopy-group calculation \cite{Toda}.
The obstruction vanishes when $12\mid k$. There are no cells of higher
dimension. The resulting bundle can be made smooth; its second Chern class
is determined by the two restrictions. No uniqueness of the extension is
needed.
\end{proof}

Regard the total space $E_k=S(\xi_k)$ as a bundle over $S^4_y$. Its fiber is
the total space of the Hopf bundle over $S^4_x$, hence is the standard $S^7$.
Choose the convention in which the principal $S^3$ action on this $S^7$ is
left scalar multiplication on $\HH^2$.

Over each closed hemisphere of $S^4_y$, the principal bundle over
$S^4_x\times D^4$ is isomorphic to the pullback of the fixed Hopf bundle.
Such isomorphisms may, for example, be obtained by invariant parallel
transport along radial paths in $D^4$. The transition is therefore a smooth
family
\begin{equation}\label{eq:gaugefamily}
 f:S^3\longrightarrow\mathcal G,\qquad
 \mathcal G=\operatorname{Gauge}(S^7\longrightarrow S^4).
\end{equation}
Each $f_p$ is an orientation-preserving diffeomorphism of $S^7$ covering the
identity of $S^4$ and commuting with left scalar multiplication.
Changing one trivialization by a constant gauge transformation makes $f$
based at the identity. A smooth homotopy then makes it the identity on a
neighborhood of its parameter basepoint.

\subsection{Fiber surgery}
Perform surgery on one $S^7$ fiber of $E_k\to S^4_y$, using the product
normal framing given by a base disk. Let $W=D(\xi_k)$ with its orientation
induced by $B$ and the quaternionic line, and let $Z$ be obtained by
attaching the corresponding index-eight handle. The resulting boundary
has the presentation
\begin{equation}\label{eq:fibersurgery}
 \Sigma_k=(S^7\times D^4)\ \cup_{\Phi_f}\ (D^8\times S^3),
 \qquad \Phi_f(w,p)=(f_p(w),p),
\end{equation}
where $\Phi_f$ is the surgery attaching map. Orient the handle so that
it extends the orientation of $W$ across the oriented surgery, and orient
$\partial Z$ by the outward-normal-first convention. We use this
orientation on $\Sigma_k=\partial Z$ throughout; the oriented
comparison with the two-disk model is made explicitly below.

\begin{lemma}\label{lem:sphere}
The manifold in \eqref{eq:fibersurgery} is a homotopy $11$-sphere.
\end{lemma}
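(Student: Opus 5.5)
We will show that $\Sigma_k$ is a closed, simply connected $11$-manifold with the integral homology of $S^{11}$. The Hurewicz and Whitehead theorems then give a homotopy equivalence with $S^{11}$. The presentation \eqref{eq:fibersurgery} makes $\Sigma_k$ a smooth closed manifold, since $\Phi_f$ is a diffeomorphism of $S^7\times S^3$: each $f_p$ is a diffeomorphism depending smoothly on $p$, with inverse $(w,p)\mapsto(f_p^{-1}(w),p)$. Simple connectivity follows from van Kampen. Both pieces $S^7\times D^4$ and $D^8\times S^3$ are simply connected, and their intersection $S^7\times S^3$ is connected.

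For homology we use the Mayer--Vietoris sequence of the decomposition. The nonzero reduced groups are as follows. The intersection has $H_3\cong\Z$, $H_7\cong\Z$ and $H_{10}\cong\Z$. The pieces have $H_7(S^7\times D^4)\cong\Z$ and $H_3(D^8\times S^3)\cong\Z$. The key point is that the map
\[
 H_*(S^7\times S^3)\to H_*(S^7\times D^4)\oplus H_*(D^8\times S^3)
\]
is an isomorphism in degrees $3$ and $7$. On the $S^7\times D^4$ side the inclusion is the identity, so $[S^7]$ maps isomorphically and $[S^3]$ maps to zero. On the $D^8\times S^3$ side the inclusion is composed with $\Phi_f$. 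Since $\Phi_f$ commutes with projection to $S^3$, the class $[S^3]$ maps to a generator. The class of $S^7\times\{p\}$ maps into $D^8\times\{p\}$, hence to zero.

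The only subtlety is whether $\Phi_{f*}[S^7]$ has a component along $[S^3]$. This cannot happen for degree reasons, since the degrees differ. So in degrees $3$ and $7$ the Mayer--Vietoris map is a direct sum of two isomorphisms. The $H_{10}$ class maps to zero, because both pieces have vanishing $H_{10}$, and so it produces $H_{11}(\Sigma_k)\cong\Z$. All other reduced groups of $\Sigma_k$ vanish. Alternatively, one can argue that $\Sigma_k$ is the union of two $11$-disks after a handle-theoretic rearrangement, but the Mayer--Vietoris argument suffices.

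The main point needing care is the degree-$7$ step: that $\Phi_f$, restricted to $S^7\times\{p\}$, lands in $D^8\times\{p\}$ and is orientation-preserving of degree one on $S^7$. This holds because $f_p$ covers the identity of $S^4$ and is a diffeomorphism. In fact only injectivity modulo the other summand is needed. We do not need to compute the smooth type here; that is deferred to the characteristic-number computation on $Z$.
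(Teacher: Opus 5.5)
Your proof is correct and follows the same route as the paper: simple connectivity of the pieces, a Mayer--Vietoris computation in which the degree-$3$ and degree-$7$ maps are isomorphisms because $\Phi_f$ preserves the $S^3$ coordinate, and then Hurewicz plus the homological Whitehead theorem. One small point: in your convention the twist sits on the $D^8\times S^3$ side, where $H_7=0$, so the degree of $f_p$ plays no role, and the worry about a ``component along $[S^3]$'' is vacuous, since classes in different degrees cannot mix.
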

\begin{proof}
The pieces and their intersection $S^7\times S^3$ are simply connected.
In the Mayer--Vietoris sequence, the map from $H_7(S^7\times S^3)$ to
$H_7(S^7\times D^4)$ is an isomorphism because every $f_p$ has degree one.
The map from $H_3(S^7\times S^3)$ to $H_3(D^8\times S^3)$ is an isomorphism
because $\Phi_f$ preserves $p$. All other intermediate groups vanish, and
$\Sigma_k$ has the integral homology of $S^{11}$. By the Hurewicz theorem applied successively, its homotopy groups
below dimension eleven vanish. A generator in dimension eleven is
represented by a map from $S^{11}$ which is a homology equivalence.
The homological form of the Whitehead theorem makes it a homotopy
equivalence, since both spaces are simply connected.
\end{proof}

\subsection{The equivariant two-disk presentation}
Write the parameter sphere as $D^3/\partial D^3$ and choose the gauge family
to be the identity on a collar of the boundary of this parameter disk.
Principal division gives a unique smooth map $b(p,w)\in S^3$ such that
\begin{equation}\label{eq:division}
 f_p(w)=b(p,w)w,\qquad b(p,qw)=q b(p,w)q^{-1}.
\end{equation}
For $w=(w_1,w_2)\in S^7$, principal division has the concrete formula
$b(p,w)=f_{p,1}(w)\overline w_1+f_{p,2}(w)\overline w_2$.
The second identity follows from $f_p(qw)=qf_p(w)$.

Let
\[
 V=\R^3\oplus\HH^2,\qquad \rho(q)(p,w)=(p,qw),\qquad S(V)=S^{10}.
\]
Choose $0<r_0<1$. On the subset $|p|\le r_0$ of $S(V)$ define
\[
 \beta(p,w)=b(p/r_0,w/|w|).
\]
Extend it by $1$ on $|p|\ge r_0$. Since $b$ equals $1$ on a parameter
boundary collar, this is smooth, including near $w=0$. It satisfies
\begin{equation}\label{eq:betaequiv}
 \beta(\rho(q)z)=q\beta(z)q^{-1}.
\end{equation}
Define
\begin{equation}\label{eq:J}
 J_\beta(z)=\rho(\beta(z))z.
\end{equation}
This is a diffeomorphism: its inverse is $J_{\beta^{-1}}$. Indeed,
$\beta(\rho(\beta(z))z)=\beta(z)$ by equivariance.

\begin{lemma}\label{lem:twodisk}
With the orientations on the disk pieces induced from the surgery boundary,
there is an orientation-preserving diffeomorphism
$\Sigma_k\cong D^{11}_{N}\cup_{J_\beta}D^{11}_{S}$.
Here the north-to-south boundary transition is $J_\beta$; reversing the
labels gives $J_\beta^{-1}$ and the same oriented sphere.
\end{lemma}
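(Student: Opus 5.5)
The plan is to realize both sides as twisted versions of the standard sphere $S^{11}$ and to show that the twists are supported in the same region. First I would model the untwisted sphere in two ways. On one side, $S^{11}=\partial(D^8\times D^4)=(S^7\times D^4)\cup(D^8\times S^3)$. On the other side, $S^{11}\subset\R\oplus V$ is the union of its northern and southern hemispheres $D^{11}_N,D^{11}_S$, and their common boundary is $S(V)=S^{10}$. Inside $S^{10}$, the region $U=\{|p|\le r_0\}$ is diffeomorphic to $D^3\times S^7$ via $(p,w)\mapsto(p/r_0,w/|w|)$; here $w\neq0$ because $r_0<1$. Write the parameter sphere as $S^3=D^3/\partial D^3$. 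In these coordinates $J_\beta$ restricts on $U$ to $(p,w)\mapsto(p,f_{p/r_0}(w/|w|)\,|w|)$, by \eqref{eq:division}. Outside $U$ it is the identity, because $\beta=1$ there. Thus $J_\beta$ is the suspension-type extension of the family $\Phi_f$, restricted to $D^3\times S^7$.

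The key step is a cut-and-reglue argument. Since $f$ is the identity near the basepoint, i.e.\ on a collar of $\partial D^3$, $\Phi_f$ is supported in $S^7\times D^3_{\mathrm{int}}$, where $D^3_{\mathrm{int}}\subset S^3=\partial D^4$. I would construct an explicit diffeomorphism of the standard $S^{11}$ that carries $D^8\times S^3$ onto a tubular thickening $\{|p|\text{ small}\}$ of the equator $S^{10}$ near $U$. Under this diffeomorphism the slice $S^7\times D^3_{\mathrm{int}}\subset S^7\times S^3$ should go to $U\subset S^{10}$, with $p$ matching the $D^3$ coordinate. I would take the diffeomorphism to be a radial reparametrization of $\R^4\oplus\HH^2\supset S^{11}$, with $\R^4=\R\oplus\R^3$, in which the $\R$-coordinate becomes the normal direction to the equator. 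Regluing $S^{11}$ along $S^7\times S^3$ by $\Phi_f$ then changes the manifold only near $U$. The standard lemma, that regluing along a hypersurface by a diffeomorphism supported in a compact piece of it equals regluing along any hypersurface containing that piece, gives $\Sigma_k\cong D^{11}_N\cup_{J_\beta}D^{11}_S$. To make the transfer legitimate I would enlarge $S^7\times D^3_{\mathrm{int}}$ to all of $S^{10}$ by the identity. I would also check that the resulting map is smooth near $w=0$, which holds because $\beta=1$ on $|p|\ge r_0$.

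Finally, I would check orientations and the labelling convention. The orientation of $\Sigma_k=\partial Z$ uses the outward-normal-first convention, and I would compare it with the orientation on $D^{11}_N\cup D^{11}_S$ induced from $\R\oplus V$. The comparison reduces to the sign of the determinant of the linear part of the coordinate change at one point of $U$. If the sign is wrong, it can be fixed by composing with the reflection $p\mapsto -p$ on $\R^3$. This reflection commutes with $\rho$, and it replaces $\beta$ by $\beta\circ(\text{reflection})$, which still has the required form. For the last sentence of the lemma, swapping $N$ and $S$ inverts the transition map, giving $J_\beta^{-1}=J_{\beta^{-1}}$. The identity map on each disk then gives an orientation-preserving identification of the two glued spaces.

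I expect the main obstacle to be the middle step: writing the explicit diffeomorphism of $S^{11}$ that matches the handle decomposition $S^7\times D^4\cup D^8\times S^3$ with a neighbourhood of $U$ in the equator, compatibly with the $\rho$-action and the $D^3$ parameter. My first attempt would be the product-of-disks corner-smoothing map $D^8\times D^4\to D^{12}$, restricted to the boundary.
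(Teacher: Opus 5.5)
Your overall mechanism is the right one, and it is in substance what the paper does: a twist supported on a compact piece of a hypersurface can be pushed through a product collar onto a parallel hypersurface, and regluing is local. However, the step you single out as the main obstacle is set up incorrectly, and the construction you propose for it cannot work.

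First, the piece that should fill a thickening of $U\cong D^3\times S^7$ is $S^7\times D^4$. Indeed, $\{|x_0|\le\epsilon,\ |p|\le r_0\}\subset S^{11}$ is $S^7$ times a $4$-disk. The handle $D^8\times S^3\simeq S^3$ is not even homotopy equivalent to it. Also, $\{|p|\ \text{small}\}\cong D^3\times S^8$ is a neighborhood of $\{p=0\}$, not of the equator.

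Second, a radial reparametrization of $\R^4\oplus\HH^2$, or the corner-smoothing map $D^8\times D^4\to D^{12}$, preserves the sign of the $\R$-coordinate $x_0$. The interface $S^7\times S^3$, however, meets $\{x_0=0\}$ only along $S^7\times S^2$. So no such map carries the ten-dimensional piece $S^7\times D^3_{\mathrm{int}}$ into $S^{10}$. What is missing is an explicit non-radial push across the levels of the normal coordinate, matched with the $D^3$ parameter.

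The paper supplies this by choosing the model rather than moving the round sphere. It writes $D^4$ as a rounded $D^3\times[-1,1]$ with $f$ supported on the top face. It then cuts $\Sigma_k$ along $(S^7\times D^3\times\{0\})\cup(D^8\times S^2\times\{0\})=\partial(D^8\times D^3)$. The lower half is a standard disk. On the upper half, the map $\sigma_N(w_{\rm old},p,s)=(f_p^{-1}(w_{\rm old}),p,s)$ on $S^7\times D^3\times[0,1]$ is the identity near the side, because $f_p=\mathrm{id}$ near $\partial D^3$. It also cancels $\Phi_f$ on the top face. Hence it extends by the identity over the handle and straightens $Y_N$. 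On the cut face one reads off $w_S=f_p(w_N)$, which is $J_\beta$ on $S^7\times D^3$ and the identity on $D^8\times S^2$, together with the marking $\chi_N=\chi_S\circ J_\beta$. This is exactly your push, realized by the product collar, and it needs neither a diffeomorphism of the round $S^{11}$ nor a separate locality lemma.

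Your orientation step is unnecessary. In the statement, the disk orientations are defined by pullback from $\partial Z$, so there is no sign to compare. In any case, the proposed reflection $r(p,w)=(-p,w)$ would replace the clutching map by $rJ_\beta r=J_{\beta\circ r}$. That is not the map in the statement, and for fixed disk orientations it gives the oppositely oriented sphere.
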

\begin{proof}
The standard decomposition
\[
 S^{10}=(S^7\times D^3)\cup(S^2\times D^8)
\]
identifies $J_\beta$ with $(w,p)\mapsto(f_p(w),p)$ on its first part and
with the identity on its second part. The radial rescaling in the definition
of $\beta$ changes only the parameterization of these pieces.

Express $D^4$ as a rounded $D^3\times[-1,1]$, with the support of $f$
on the top face of its boundary. Cut the oriented surgery boundary
$\partial Z=\Sigma_k$ along the middle copy
\[
 (S^7\times D^3\times\{0\})\cup(D^8\times S^2\times\{0\}).
\]
Call the upper and lower halves $Y_N$ and $Y_S$, and give them the
orientations restricted from $\partial Z$. The lower half is a standard
disk, with identity straightening coordinate $\sigma_S$. On the
cylindrical part of the upper half, the straightening coordinate from the
surgery coordinates to disk coordinates is
\[
 \sigma_N(w_{\rm old},p,s)=(f_p^{-1}(w_{\rm old}),p,s),
\]
where $s$ is the interval coordinate. It is the identity near the side
boundary and cancels the top-face attaching map, so it extends over the
upper half as a diffeomorphism to a standard disk. Every $f_p$ preserves
the orientation of $S^7$, so $\sigma_N$ and its inverse preserve the
product orientation on this cylindrical part. Let
$\chi_N=\sigma_N^{-1}:D_N^{11}\to Y_N$ and
$\chi_S=\sigma_S^{-1}:D_S^{11}\to Y_S$ be the disk identifications.
Orient the abstract disks by pullback of the orientations of $Y_N,Y_S$
under $\chi_N,\chi_S$. Thus both disk identifications preserve
orientation, and the boundary transition is orientation reversing for
the induced outward-normal-first boundary orientations, as required for
oriented gluing.

On the cutting face, if $w_N$ and $w_S$ denote the upper and lower disk
coordinates, then $w_N=f_p^{-1}(w_{\rm old})$ and $w_S=w_{\rm old}$;
hence $w_S=f_p(w_N)$. On $S^7\times D^3$ this is precisely $J_\beta$,
and on $D^8\times S^2$ it is the identity. The identification of the
cutting face is recorded by the commutative diagram
\[
\begin{array}{ccc}
 \partial D_N^{11} & \xrightarrow{\ J_\beta\ } & \partial D_S^{11}\\
 {\scriptstyle\chi_N}\downarrow && \downarrow{\scriptstyle\chi_S}\\
 Y & \xrightarrow{\ \mathrm{id}\ } & Y.
\end{array}
\]
Here $Y$ is the cutting hypersurface and
$\chi_N=\chi_S\circ J_\beta$ on the boundary. Together with the
outward-normal-first orientation of $\partial Z$, this specifies the sign
of the clutching map relative to the oriented filling, not just its
unoriented diffeomorphism type. The argument uses a family of
diffeomorphisms, not an extension of an individual gauge map to $D^8$.
\end{proof}

\subsection{The Eells--Kuiper invariant}
We now identify the sphere. The calculation is made on the filling
associated with the actual fiber surgery.

For an oriented homotopy eleven-sphere bounding a spin manifold $Z^{12}$,
normalize the Eells--Kuiper invariant by
\begin{equation}\label{eq:mu}
 \mu(\partial Z)=\frac{-3\langle\bar p_1p_1^2,[Z,\partial Z]\rangle
 +4\langle\bar p_1p_2,[Z,\partial Z]\rangle-24\sig(Z)}{190464}
 \pmod{\Z}.
\end{equation}
Here $\bar p_1$ is the rational relative lift. On
$\Theta_{11}=bP_{12}\cong\Z/992$ this invariant is injective
\cite{EK,KM,BGKT}; a parallelizable filling of signature eight has
invariant $-1/992$ in this convention.

The filling $Z$ is obtained from $W$ by the index-eight handle attachment
along the framed $S^7$ fiber described above, and $\partial Z=\Sigma_k$.
The attachment has index eight, so
\begin{equation}\label{eq:cohomhandle}
 H^4(Z;\Z)\longrightarrow H^4(W;\Z)\ \text{is an isomorphism},
 \qquad H^6(Z;\Z)=0.
\end{equation}
Also $H^2(Z;\Z/2)=0$, hence $Z$ is spin and $\sig(Z)=0$.

The zero section $B$ stays in the interior of $Z$, with normal bundle
$\xi_k$. Let $u=\PD_Z[B]\in H^4(Z,\partial Z;\Z)$. Its image in the absolute
cohomology of $W$ is $e=x+ky$. Since $TB$ is stably trivial,
\[
 p_1(TW)=-2e.
\]
Equation \eqref{eq:cohomhandle} implies that $p_1(TZ)=-2j(u)$, where $j$
is the relative-to-absolute map. As $\partial Z$ is a homotopy sphere, $j$
is an isomorphism in degree four. The relative lift is therefore exactly
\begin{equation}\label{eq:relativep1}
 \bar p_1=-2u.
\end{equation}
On $B$, the splitting $TZ|_B=TB\oplus\xi_k$ and
\eqref{eq:lineclasses} give $p_1=-2e$ and $p_2=e^2$. Consequently
\begin{align}
 \langle\bar p_1p_1^2,[Z,\partial Z]\rangle
 &=-2\langle4e^2,[B]\rangle=-16k,\label{eq:p13}\\
 \langle\bar p_1p_2,[Z,\partial Z]\rangle
 &=-2\langle e^2,[B]\rangle=-4k.\label{eq:p1p2}
\end{align}
These calculations do not require knowing $p_2$ on the additional
$8$-cycle created by the handle: the relative $p_1$ is supported on the
zero section in the Poincare-duality sense.
Substitution in \eqref{eq:mu} yields
\begin{equation}\label{eq:computedmu}
 \mu(\Sigma_k)=\frac{48k-16k}{190464}=\frac{k}{5952}.
\end{equation}
For $k=12m$, this is $2m/992$. It runs through all $496$ elements of the
even subgroup. In particular $k=-384$ gives $-64/992=-2/31$, a class of
order $31$.

\begin{remark}
The surgery here is on the $S^7$ fiber, not on an $S^4$ section of
$E_k\to S^4$. The latter is a different surgery and does not, by itself,
identify the clutching sphere. Equations \eqref{eq:relativep1}--\eqref{eq:p1p2}
are computed directly on the filling for the fiber surgery.
\end{remark}

For every $k\in12\Z$, the linear action $\rho$ on the two disks gives a
global $S^3$ action, because $J_\beta$ is equivariant. It is free
wherever the quaternionic coordinate is nonzero. A point of this kind
will be used in Section~\ref{sec:circles}. The case $k=-384$ is the
particular example with $\mu=-2/31$.

\section{Curvature of a connection metric}\label{sec:connection}
We establish an estimate valid in every base dimension. The main point
is to control arbitrary two-planes, including the terms coupling
horizontal and vertical components.

\subsection{The curvature components}
Let $(B_0,g_0)$ be a compact base, possibly with boundary, with
$\secg_{g_0}\ge\kappa>0$, and let
$\omega$ be any smooth principal $S^3$ connection. Normalize the Lie algebra
inner product $Q$ so that the group has sectional curvature one, and put
\[
 G=\pi^*g_0+r^2Q(\omega,\omega),\quad
 \theta=d\log r,\quad N=-r^{-1}\Hess r.
\]
For its curvature form $\Omega$, use
\[
 M_0=\sup|\Omega|,\qquad M_1=\sup|D\Omega|,
 \qquad |\Omega|^2=\sum_{i<j,a}(\Omega^a_{ij})^2,\quad
 |D\Omega|^2=\sum_{l,i<j,a}((D_l\Omega)^a_{ij})^2.
\]
For a decomposable bivector in normalized horizontal and vertical frames,
write
\[
 (X+U)\wedge(Y+V)=\alpha+\gamma+\nu,
 \quad \alpha=X\wedge Y,\quad
 \gamma=X\otimes V-Y\otimes U,\quad\nu=U\wedge V,
\]
and set $h=|\alpha|$, $m=|\gamma|$, $v=|\nu|$.

\begin{lemma}\label{lem:connectionbound}
The sectional numerator satisfies
\begin{align}\label{eq:curvaturebound}
 \mathcal K_G\ \ge{}&
 (\kappa-\tfrac34r^2M_0^2)h^2
 +(\lambda_{\min}N-\tfrac12r^2M_0^2)m^2
 +(r^{-2}-|\theta|^2)v^2\notag\\
 &-2r(M_1+2|\theta|M_0)hm
 -(3M_0+r^2M_0^2)hv-2r|\theta|M_0mv.
\end{align}
\end{lemma}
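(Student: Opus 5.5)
The plan is to compute the full curvature tensor of $G=\pi^*g_0+r^2Q(\omega,\omega)$ in an adapted orthonormal frame, then bound each block of the quadratic form $\mathcal K_G(\alpha+\gamma+\nu)$ separately. I would take horizontal lifts $X_i$ of a $g_0$-orthonormal frame, together with vertical fields $U_a=r^{-1}e_a^*$, where $e_a^*$ are the fundamental fields of an orthonormal basis of $\operatorname{Im}\HH$. Using the Koszul formula I would record the connection:
\begin{itemize}
\item $\nabla_{X_i}X_j$ equals the base connection plus $-\tfrac12 r\,\Omega^a_{ij}U_a$;
\item $\nabla_{X_i}U_a$ and $\nabla_{U_a}X_i$ contain $\tfrac12 r\Omega^a_{ij}X_j$ together with the warping term $\theta(X_i)U_a$;
\item $\nabla_{U_a}U_b$ contains $r^{-1}\epsilon_{abc}U_c$ and $-r\,\mathrm{grad}\,r$ terms.
\end{itemize}
Here $\theta=d\log r$. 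This is the standard O'Neill $A$/$T$ calculation for a Riemannian submersion, with the fibers totally umbilic of mean-curvature form $\theta$ instead of totally geodesic.

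From these I would read off the six blocks of $\mathcal K_G(\alpha+\gamma+\nu)=R(\alpha,\alpha)+R(\gamma,\gamma)+R(\nu,\nu)+2R(\alpha,\gamma)+2R(\alpha,\nu)+2R(\gamma,\nu)$.

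\emph{Horizontal block.} This is the base curvature minus the O'Neill term $\tfrac34 r^2|\Omega(X,Y)|^2$. Since $|\Omega(X,Y)|\le M_0|X\wedge Y|$, it is at least $(\kappa-\tfrac34r^2M_0^2)h^2$.

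\emph{Vertical block.} This is the Gauss equation for a round fiber of radius $r$ with umbilic second fundamental form $-\theta\,G|_{\rm fiber}$. It gives $r^{-2}v^2-|\theta|^2v^2$.

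\emph{Cross terms.} The horizontal--mixed components $R(X,Y,Y,V)$ are $\tfrac12 r\,G((D_Y\Omega)(X,Y),V)$ plus terms $r\,\theta\cdot\Omega$. After Cauchy--Schwarz, using the norm convention that ordered component arrays cost $\sqrt2$, this yields $-2r(M_1+2|\theta|M_0)hm$. The horizontal--vertical components $R(X,Y,U,V)$ contain $\Omega(X,Y)$ paired with the bracket $[U,V]$, which contributes $\pm r^{-1}\cdot r$, together with $r^2\Omega\cdot\Omega$ products from the $A$-tensor. These give at most $(3M_0+r^2M_0^2)hv$. The mixed--vertical components involve only $\theta$ and $\Omega$ through $\nabla_U X$, giving $-2r|\theta|M_0mv$.

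\emph{Mixed block.} This is the step I expect to be the main obstacle. The difficulty is that $\gamma=X\otimes V-Y\otimes U$ is not a single wedge, so I must treat $R(\gamma,\gamma)$ as a bilinear form on $\mathcal H\otimes\mathcal V$, not as a sum of sectional curvatures. Its components $R(X_i,U_a,U_b,X_j)$ split into three parts:
\begin{itemize}
\item the Hessian part $-r^{-1}\Hess r(X_i,X_j)\delta_{ab}=N(X_i,X_j)\delta_{ab}$, whose contribution is bounded below by $\lambda_{\min}N\,m^2$ using the orthogonal decomposition of $\mathcal H\otimes\mathcal V$;
\item a nonnegative $A$-tensor part $\tfrac14 r^2\langle\Omega^a(X_i,\cdot),\Omega^b(X_j,\cdot)\rangle$;
\item the remaining $\Omega$-cross terms, arising from $[U_a,U_b]$ paired with $\Omega(X_i,X_j)$.
\end{itemize}
The $A$-tensor part evaluated on a general element of $\mathcal H\otimes\mathcal V$ is not a square. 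I would try to write it as $\tfrac14r^2|\mathcal A\gamma|^2$ minus a commutator correction, and bound the correction by $\tfrac12r^2M_0^2m^2$. If that fails, I would fall back on bounding the whole $\Omega$-quadratic part in operator norm by $\tfrac12 r^2M_0^2$, which is what the coefficient in \eqref{eq:curvaturebound} suggests.

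Finally, I would collect the six estimates, checking that each cross term appears with factor $2$ and that all signs are consistent with the convention $\mathcal K(X\wedge Y)=g(R(X,Y)Y,X)$.
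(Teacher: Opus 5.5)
There is a genuine gap in your treatment of the mixed block $R(\gamma,\gamma)$. Besides the Hessian part and the quadratic $A$-tensor part, the component $\langle R(X_i,E_a)E_b,X_j\rangle$ contains a term \emph{linear} in the curvature, $-\tfrac14\langle[e_a,e_b],\Omega_{ij}\rangle$. This term carries no factor $r^2$; it comes from differentiating $A$ vertically, using the adjoint equivariance of $\Omega$. You list it as ``the remaining $\Omega$-cross terms'' but never say how to handle it, and your fallback only controls the quadratic part. If you bound it on $\mathcal H\otimes\mathcal V$ in operator norm, you get a loss of order $M_0m^2$. That loss is absent from \eqref{eq:curvaturebound}, and it would also ruin the application in Lemma~\ref{lem:shrinking}. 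There the $m^2$ coefficient $\lambda_{\min}N$ is only of order $\varepsilon$, and after the rescaling $M=\sqrt\varepsilon m$ a term $M_0m^2=M_0M^2/\varepsilon$ is unbounded.

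The missing idea is decomposability. The linear term is antisymmetric in $(i,j)$ and in $(a,b)$; by the first Bianchi identity it is tied to the block $\langle R(X_i,X_j)E_a,E_b\rangle$. For the decomposable bivector $(X+U)\wedge(Y+V)$ one has the Pl\"ucker-type identity $\gamma_{ia}\gamma_{jb}-\gamma_{ib}\gamma_{ja}=\alpha_{ij}\nu_{ab}$. Hence the contraction of this term with $\gamma\otimes\gamma$ is not a quadratic expression in $\gamma$ at all. It equals $-\tfrac12\langle\Omega(X,Y),[U,V]\rangle$, which is an $hv$ term. Adding the $-\langle\Omega(X,Y),[U,V]\rangle$ from the $\alpha\nu$ cross term gives $-\tfrac32\langle\Omega(X,Y),[U,V]\rangle$, which is bounded by $3M_0hv$ because $|[U,V]|=2|U\wedge V|$. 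This is the source of the coefficient $3$. The linear part of the $\alpha\nu$ block alone gives only $2M_0hv$, so your attribution of $3M_0hv$ to $R(X,Y,U,V)$ by itself is not justified as written.

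By contrast, the step you flag as the main obstacle is routine. The quadratic $A$-tensor part has crossed indices $\Omega^b_{il}\Omega^a_{jl}$ and is not a square, but Cauchy--Schwarz with the $\sqrt2$ array norms bounds it by $\tfrac12r^2M_0^2m^2$, exactly as in your fallback.
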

\begin{proof}
Let $X_i$ be horizontal lifts of a base orthonormal frame, and let
$E_a=r^{-1}e_a^\#$ for a fixed orthonormal basis of $\operatorname{Im}\HH$.
Use a base-normal frame and a local connection gauge normal at the point.
With $R(X,Y)=\nabla_X\nabla_Y-\nabla_Y\nabla_X-\nabla_{[X,Y]}$,
the relevant frame brackets are
\[
 [X_i,X_j]=-r\Omega_{ij}^aE_a,\qquad
 [X_i,E_a]=-\theta_iE_a,\qquad
 [E_a,E_b]=r^{-1}c_{ab}^cE_c
\]
at the chosen base-normal point. Substitution in the Koszul formula
gives the connection identities
\begin{align*}
 \nabla_{X_i}X_j&=(\nabla^0_iX_j)^H-\tfrac r2\Omega^a_{ij}E_a,&
 \nabla_{X_i}E_a&=\tfrac r2\Omega^a_{ij}X_j,\\
 \nabla_{E_a}X_i&=\tfrac r2\Omega^a_{ij}X_j+\theta_iE_a,&
 \nabla_{E_a}E_b&=\tfrac1{2r}c^c_{ab}E_c-\delta_{ab}\theta_iX_i.
\end{align*}
Here $[e_a,e_b]=c^c_{ab}e_c$, and adjoint equivariance gives
$E_a(\Omega^b_{ij})=-r^{-1}c^b_{ac}\Omega^c_{ij}$.
In differentiating these expressions, the vertical derivative of
$\Omega$ must be retained. Its adjoint equivariance is responsible
for the Lie-bracket terms in the first two lines below. The Hessian
term is $N_{ij}=-\nabla^0_i\theta_j-\theta_i\theta_j$.
Computing $R(A,B)C$ now gives the mixed components
\begin{align*}
 \langle R(X_i,E_a)E_b,X_j\rangle
 &=N_{ij}\delta_{ab}+\tfrac{r^2}4\sum_l\Omega^b_{il}\Omega^a_{jl}
   -\tfrac14\langle[e_a,e_b],\Omega_{ij}\rangle,\\
 \langle R(X_i,X_j)E_a,E_b\rangle
 &=\tfrac{r^2}4\sum_l(\Omega^a_{il}\Omega^b_{jl}-\Omega^a_{jl}\Omega^b_{il})
   +\tfrac12\langle[e_a,e_b],\Omega_{ij}\rangle,\\
 \langle R(X_i,E_a)E_b,E_c\rangle
 &=\tfrac r2\sum_l\theta_l(\delta_{ab}\Omega^c_{il}-\delta_{ac}\Omega^b_{il}),\\
 \langle R(X_i,X_j)X_l,E_a\rangle
 &=-\tfrac r2((D_i\Omega)^a_{jl}-(D_j\Omega)^a_{il})\\
 &\hspace{5mm}-\tfrac r2(\theta_i\Omega^a_{jl}-\theta_j\Omega^a_{il})
   +r\theta_l\Omega^a_{ij}.
\end{align*}
The horizontal sectional term is the base term minus
$3r^2|\Omega(X,Y)|^2/4$. The vertical curvature coefficient is
$r^{-2}-|\theta|^2$.

There is one essential algebraic point in contracting these components.
Decomposability gives
\[
 \gamma_{ia}\gamma_{jb}-\gamma_{ib}\gamma_{ja}
   =\alpha_{ij}\nu_{ab}.
\]
Thus the terms linear in $\Omega$ from the two mixed blocks combine to
$-\frac32\langle\Omega(X,Y),[U,V]\rangle$. Their absolute value is at most
$3M_0hv$, since $|[U,V]|=2|U\wedge V|$. They are not an uncontrolled
multiple of $m^2$.

For the remaining terms, the ordered-array norms of $\Omega$ and $D\Omega$
are $\sqrt2|\Omega|$ and $\sqrt2|D\Omega|$. Cauchy--Schwarz bounds the
quadratic mixed block by $r^2M_0^2m^2/2$, its horizontal--vertical counterpart
by $r^2M_0^2hv$, and the two derivative blocks by
$2r(M_1+2|\theta|M_0)hm$ and $2r|\theta|M_0mv$.
Combining these bounds proves \eqref{eq:curvaturebound}. This computation
has no restriction on the base dimension.
\end{proof}

\subsection{A small-fiber criterion}
The estimate contains positive horizontal and vertical terms, while
the mixed diagonal term is controlled by a Hessian. We choose the
radius so that these contributions dominate the errors.

\begin{lemma}\label{lem:shrinking}
Suppose $-\Hess\phi\ge\Lambda g_0$ and
\begin{equation}\label{eq:Lambda}
 \Lambda>M_0^2+2M_1^2/\kappa.
\end{equation}
In the application $B_0$ is a compact cap with boundary; the boundary is
where the radial profile is matched. The strict concavity hypothesis is
not intended for a closed positive-dimensional base.
For all sufficiently small $\varepsilon>0$, the metric with
$r^2=\varepsilon e^{\varepsilon\phi}$ has positive sectional curvature.
\end{lemma}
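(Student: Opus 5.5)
The plan is to substitute $r^2=\varepsilon e^{\varepsilon\phi}$ into Lemma~\ref{lem:connectionbound} and expand every coefficient in powers of $\varepsilon$. The bound in \eqref{eq:curvaturebound} then becomes a quadratic form in the nonnegative quantities $(h,m,v)$ whose cross terms all carry a minus sign. It therefore suffices to show that this $3\times3$ coefficient matrix is positive definite, uniformly on the compact cap $B_0$, for all small $\varepsilon$. A nonzero decomposable bivector has $(h,m,v)\neq0$, so positive definiteness gives $\mathcal K_G>0$. Throughout, put $C_1=\sup|d\phi|$ and $C_2=\sup|\Hess\phi|$; both are finite by compactness.

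\textbf{Step 1: the expansions.} We have $\log r=\tfrac12\log\varepsilon+\tfrac\varepsilon2\phi$, so $\theta=\tfrac\varepsilon2 d\phi$ and $|\theta|\le \varepsilon C_1/2$. Next, $r^{-1}\Hess r=\Hess(\log r)+\theta\otimes\theta$, which gives
\[
N=-\tfrac\varepsilon2\Hess\phi-\tfrac{\varepsilon^2}4 d\phi\otimes d\phi .
\]
Hence $\lambda_{\min}N\ge \tfrac\varepsilon2\Lambda-\tfrac{\varepsilon^2}4C_1^2$. Also $r^2=\varepsilon(1+O(\varepsilon))$ and $r=\sqrt\varepsilon(1+O(\varepsilon))$, uniformly on $B_0$. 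Inserting these into \eqref{eq:curvaturebound} gives
\[
\mathcal K_G\ge (\kappa-O(\varepsilon))h^2+\varepsilon\Big(\tfrac{\Lambda-M_0^2}2-O(\varepsilon)\Big)m^2+(\varepsilon^{-1}-O(1))v^2
-2\sqrt\varepsilon(M_1+O(\varepsilon))hm-(3M_0+O(\varepsilon))hv-O(\varepsilon^{3/2})mv .
\]

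\textbf{Step 2: absorb the terms involving $v$.} The hypothesis \eqref{eq:Lambda} is strict, so we can fix $\delta>0$ with
\[
4M_1^2<2(\kappa-2\delta)(\Lambda-M_0^2).
\]
Young's inequality gives
\[
(3M_0+O(\varepsilon))hv\le\delta h^2+C_\delta v^2,\qquad O(\varepsilon^{3/2})mv\le\varepsilon^2m^2+O(\varepsilon)v^2 .
\]
For small $\varepsilon$ the $v^2$ coefficient remains at least $\tfrac1{2\varepsilon}>0$. The $h^2$ and $m^2$ coefficients drop only by $\delta$ and $\varepsilon^2$ respectively.

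\textbf{Step 3: the $(h,m)$ block, which is the main point.} The remaining binary form is
\[
(\kappa-\delta-O(\varepsilon))h^2-2\sqrt\varepsilon(M_1+O(\varepsilon))hm+\varepsilon\Big(\tfrac{\Lambda-M_0^2}2-O(\varepsilon)\Big)m^2 .
\]
It is positive definite exactly when the discriminant is negative. Every term in the discriminant has order $\varepsilon$, and the condition reads
\[
\varepsilon\big[4M_1^2-2(\kappa-\delta)(\Lambda-M_0^2)\big]+O(\varepsilon^2)<0 .
\]
This holds for small $\varepsilon$ by the choice of $\delta$. Note that $\Lambda>M_0^2$ also makes the $m^2$ coefficient positive.

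This step is where the precise threshold in \eqref{eq:Lambda} enters. The scaling is exact: the $hm$ cross term is of order $\sqrt\varepsilon$, while the mixed diagonal term is of order $\varepsilon$, so neither dominates and the constants must be compared directly. The delicate part of the argument is to confirm that all the other errors, namely the $O(\varepsilon)$ corrections to $r$ and $\lambda_{\min}N$ and the $\theta$-terms, enter only at order $\varepsilon^2$ in the discriminant. Once that is checked, the three blocks combine into a positive definite form uniformly on $B_0$, which completes the proof.
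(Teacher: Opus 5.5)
Your proof is correct and follows the paper's argument: both substitute $r^2=\varepsilon e^{\varepsilon\phi}$ into Lemma~\ref{lem:connectionbound} and reduce positivity to the same $(h,m)$-block condition $M_1^2<\kappa(\Lambda-M_0^2)/2$. The paper replaces your Young-inequality and discriminant bookkeeping by the rescaling $(H,M,V)=(h,\sqrt\varepsilon\,m,v/\sqrt\varepsilon)$, under which the coefficient matrix converges uniformly to the fixed positive definite matrix with rows $(\kappa,-M_1,0)$, $(-M_1,(\Lambda-M_0^2)/2,0)$, $(0,0,1)$.
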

\begin{proof}
Uniformly on the compact base,
\[
 \theta=\tfrac\varepsilon2d\phi,\qquad
 N=-\tfrac\varepsilon2\Hess\phi
    -\tfrac{\varepsilon^2}4d\phi\otimes d\phi.
\]
In \eqref{eq:curvaturebound} use the rescaled variables
$H=h$, $M=\sqrt\varepsilon m$, $V=v/\sqrt\varepsilon$.
The coefficient matrix of its lower bound converges uniformly, with the
mixed diagonal bounded below, to
\[
 \begin{pmatrix}
 \kappa&-M_1&0\\
 -M_1&(\Lambda-M_0^2)/2&0\\
 0&0&1
 \end{pmatrix}.
\]
It is positive definite by \eqref{eq:Lambda}. Hence, for small
$\varepsilon$, the numerator is bounded below by
$c(H^2+M^2+V^2)$ for a fixed $c>0$. On any nonzero decomposable bivector
these three quantities cannot all vanish. This proves positivity on every
two-plane. Compactness gives a uniform positive lower bound for each fixed
admissible $\varepsilon$.
\end{proof}

\section{Two compatible positively curved disks}\label{sec:disks}
The two disks require different source metrics. The southern metric
will be positive on every source two-plane, whereas on the northern
side we need positivity only on quotient-horizontal planes.
The construction proves the following statement.

\begin{proposition}\label{prop:geometric}
Let $\rho:S^3\to O(V)$ be an orthogonal representation on an
$n$-dimensional real vector space, $n\ge3$, and let
$\beta:S(V)\to S^3$ satisfy \eqref{eq:betaequiv}.
The disks in $D^n\cup_{J_\beta}D^n$ admit positively curved metrics
whose boundary metrics agree under $J_\beta$ and whose outward
second fundamental forms have positive definite sum. Consequently
the glued manifold admits a smooth metric of positive sectional curvature.
\end{proposition}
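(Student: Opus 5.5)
The plan follows the He--Liu--Yau two-disk pattern and finishes with the Reiser--Wraith gluing theorem \cite{RW}: once the two disk metrics have isometric boundaries (via $J_\beta$) and positive definite sum of outward second fundamental forms, that theorem gives a smooth positively curved metric on the union. So everything reduces to building the two disk metrics.

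\textbf{Step 1: realize each disk as a quotient of a principal-bundle metric.} Write $D^n$ as the quotient of $D^n\times S^3$ by the diagonal action $q\cdot(z,s)=(\rho(q)z,\,sq^{-1})$. The principal $S^3$ acts on the right factor, and the second commuting free $S^3$ action is the one used for quotient-horizontal spaces in the Preliminaries. Outside the fixed set, $D^n\times S^3\to D^n$ is then a quotient map. Gluing the two copies of $D^n\times S^3$ by $(z,s)\mapsto(J_\beta z,\,s\beta(z)^{-1})$ commutes with the diagonal action, by \eqref{eq:betaequiv}. It therefore descends to $J_\beta$, and this is how the attaching map enters through a gauge transformation.

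\textbf{Step 2: the southern disk.} Take the base to be a spherical cap $B_0\subset S^n$ of curvature one, viewed as a hemisphere slightly enlarged so that it is strictly concave. Equip $B_0\times S^3$, or the bundle determined by $\beta$ on the boundary collar, with a connection $\omega$ and the metric $G=\pi^*g_0+r^2Q(\omega,\omega)$ with $r^2=\varepsilon e^{\varepsilon\phi}$. Here $\phi$ is a function of the distance to the pole with $-\Hess\phi\ge\Lambda g_0$ and $\Lambda$ as in \eqref{eq:Lambda}. Lemma~\ref{lem:shrinking} then gives positive curvature on every source two-plane for small $\varepsilon$. O'Neill's formula passes this to the quotient by the diagonal action, which must be isometric, so $\omega$ and $\phi$ are chosen $\rho$-equivariant. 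On the boundary the connection is the pullback $\beta^*$ of the Maurer--Cartan form; this is what makes the two boundary metrics match under $J_\beta$.

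\textbf{Step 3: the northern disk and the matching.} Use a flat (product) connection with a radial profile $dt^2+a(t)^2h_{S^{n-1}}$ on the base and fiber radius $b(t)$. Choose $a$ and $b$ concave so that the quotient metric is positively curved on quotient-horizontal planes, verified directly because the connection is flat. Match $a,b$ at the boundary with the southern values, and make their derivatives there so that the sum of outward second fundamental forms, $h'(t)/2$ in each direction, is positive definite. On the southern side, the enlarged cap already gives a convex boundary in the base direction. The fiber direction is handled by the sign of $d\phi$ at the boundary, helped by choosing $b'$ with the right sign on the north.

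\textbf{Main obstacle.} I expect the hardest step to be the northern quotient estimate together with the fiber second fundamental form. The quotient-horizontal spaces differ from the connection-horizontal ones, and near the axis $z=0$ the orbit geometry degenerates, so smoothness at the center needs $a(t)\sim t$. I would first try explicit profiles of the form $a=\sin t$ and $b$ nearly constant, then perturb.
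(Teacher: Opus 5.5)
Your overall architecture matches the paper's: a star quotient of a principal $S^3$ bundle, Lemma~\ref{lem:shrinking} on the south, a flat doubly warped metric on the north, O'Neill, then \cite{RW}. Several of your concrete choices, however, fail as written.

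First, with your action the quotient map is $(z,s)\mapsto\rho(s)z$. Since $\rho(s\beta(z)^{-1})J_\beta(z)=\rho(s)z$, your gluing $(z,s)\mapsto(J_\beta z,s\beta(z)^{-1})$ descends to the \emph{identity} and yields the standard sphere. The correct clutching is the pure gauge map $(z,s)\mapsto(z,s\beta(z))$. It is equivariant by \eqref{eq:betaequiv} and descends to $\rho(s)z\mapsto\rho(s)J_\beta(z)=J_\beta(\rho(s)z)$. With $s=u^{-1}$, this is the paper's transition $u_S=\beta^{-1}u_N$.

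Second, the southern cap must be \emph{smaller} than a hemisphere. A closed cap of radius at least $\pi/2$ contains a closed geodesic $\gamma$, and no function has $(\phi\circ\gamma)''<0$ all along it. So $-\Hess\phi\ge\Lambda g_0$ is impossible there, and Lemma~\ref{lem:shrinking} does not apply. An enlarged cap also has a concave, not convex, boundary in the base direction. The paper cuts at $a\in(\pi/2,\pi)$, so the cap has radius $\pi-a<\pi/2$. It uses $\phi=-A\cos t$ and obtains the convex base boundary $\mu_S=\cot a<0$.

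The genuine missing idea concerns the fiber direction, and it breaks Step 3.

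\emph{The southern fiber term is forced and unfavorable.} On a cap of radius $<\pi/2$, a strictly concave radial $\phi$ must decrease toward the boundary, because its angular Hessian is $\phi'\cot\sigma$ in the distance $\sigma$ from the pole. Hence $r_S$ decreases outward, and the southern outward fiber form is $-q|U|^2$ with $q=\varepsilon AF_a/2>0$ and $F_a=\sin a$. The sign of $d\phi$ is not a free choice.

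\emph{This term is not negligible on the quotient.} Horizontal lifts can have $|U|=F_ar_a^{-1}|X|$ (for instance for $V=\R^3\oplus\HH^2$ at $p=0$), with $r_a^2\approx\varepsilon$. So $q|U|^2$ reaches about $\tfrac12AF_a^3|X|^2$. By contrast, the base gain $(\mu_N-\mu_S)|X|^2$, with $\mu_N=F'(\ell)/F_a$, is at most $(1+|\cos a|)F_a^{-1}|X|^2$, while $A\ge\Lambda/|\cos a|$ is in general large.

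\emph{So the northern fiber radius must increase at the boundary.} The paper takes $r'(\ell)/r(\ell)=q$ exactly. The fiber terms then cancel, and strict positivity comes from $\mu_N-\mu_S>0$ together with injectivity of the horizontal graph $U=-\tfrac FrK_z^*X$. Since $r'(0)=0$ at the center, $r$ cannot be concave, contrary to Step 3.

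\emph{The resulting losses are not perturbative.} With $r$ increasing, \eqref{eq:doublewarp} contains the negative terms $-\tfrac{r''}{r}|\lambda V-\mu U|^2$ and $-\tfrac{F'r'}{Fr}|X\otimes V-Y\otimes U|^2$. The northern source metric is therefore not positively curved. On star-horizontal planes, \eqref{eq:graph1}--\eqref{eq:graph2} convert these into base losses $L^2F^2r''/r^3$ and $2L^2FF'r'/r^3$, where $r'/r^3\approx AF_a/2$ near the boundary. These losses are of order $A$. For $F=\sin s$ the angular loss at the boundary is about $L^2AF_a^2|\cos a|\ge L^2\Lambda F_a^2$, which can exceed the angular gain $1$. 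So ``$\sin$ plus a nearly constant fiber radius, then perturb'' does not work.

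What you need is the paper's profile \eqref{eq:F}. Its base curvatures are of order $\delta^{-2}$ and $F'(\ell)$ is small, so $\mu_N$ is small but positive. The second derivative $r''$ is supported in $s\le\delta/2$, where $F$ is small. Finally, $\delta$ is chosen against $A$ and $C_\eta$ as in \eqref{eq:delta}.
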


\subsection{The bundle and its two disk quotients}
We first describe the source bundle and its quotient coordinates.
These coordinates fix the boundary identification throughout the proof.

Write $S^n$ in polar coordinates $(t,z)$ with $0\le t\le\pi$ and
$z\in S(V)$. Use the transition
\begin{equation}\label{eq:transition}
 u_S=g(z)u_N,\qquad g=\beta^{-1},
\end{equation}
to define a principal right $S^3$ bundle. There is a commuting free left
action
\begin{equation}\label{eq:star}
 q\star(t,z,u)=(t,\rho(q)z,qu).
\end{equation}
Equivariance of $g$ makes the action compatible with
\eqref{eq:transition}. In an extending disk coordinate $\zeta$, its quotient
coordinate is $\rho(u^{-1})\zeta$. Each quotient hemisphere is a smooth
$n$-disk, including its center. On the equator the two extending quotient
coordinates are related by $J_\beta$. We cut at a fixed
$a\in(\pi/2,\pi)$ rather than at $\pi/2$; this does not change the marking.

\subsection{The southern disk}
The southern cap has radius $\pi-a<\pi/2$. A multiple of a round
height function has a negative definite Hessian on the whole cap.
It supplies the concavity required in Lemma~\ref{lem:shrinking}.
We choose the connection to be flat near the pole and a product
connection in the prescribed boundary gauge.
Give $a\le t\le\pi$ the round base metric
$dt^2+\sin^2t\,h_{S(V)}$. Choose a smooth radial cutoff $\chi$ that vanishes
near $a$ and equals one near $\pi$. In the north collar gauge and the
extending south gauge, respectively, set
\begin{equation}\label{eq:connection}
 A_N=\chi g^{-1}dg,\qquad A_S=(\chi-1)dg\,g^{-1},\qquad
 \omega=u^{-1}Au+u^{-1}du.
\end{equation}
The potentials obey the transition rule for \eqref{eq:transition}.
The south potential vanishes near the pole, so the connection is smooth.
Its curvature and covariant derivative are bounded on the compact disk.
Equivariance gives $A(\rho(q)z)=qA(z)q^{-1}$ in the pullback sense, so
$\omega$, and hence the source metric below, are star-invariant.

Choose $\Lambda$ as in \eqref{eq:Lambda} with $\kappa=1$, and choose $A>0$
so that $A|\cos a|\ge\Lambda$. For $\phi=-A\cos t$ we have
$-\Hess\phi=-A\cos t\,g_0\ge\Lambda g_0$. Therefore
\begin{equation}\label{eq:southmetric}
 G_S=dt^2+\sin^2t\,h_{S(V)}+r_S(t)^2Q(\omega,\omega),
 \qquad r_S(t)^2=\varepsilon e^{-\varepsilon A\cos t}
\end{equation}
has positive sectional curvature for small $\varepsilon$, by
Lemma~\ref{lem:shrinking}. The star quotient is a smooth positively curved
disk by O'Neill's formula \cite{ONeill}. Smoothness at the pole follows because the
connection is flat there and $r_S(\pi-\rho)$ is an even smooth function.

Write the boundary data as
\begin{equation}\label{eq:boundarydata}
 F_a=\sin a,\quad r_a^2=\varepsilon e^{\varepsilon A|\cos a|},\quad
 q=\frac{r'_S(a)}{r_a}=\frac{\varepsilon AF_a}{2},\quad \mu_S=\cot a<0.
\end{equation}
On its boundary collar, the north-gauge potential is zero, so the source
metric is a doubly warped product. In normalized angular and fiber
components $X+U$, the outward boundary form is
\begin{equation}\label{eq:southshape}
 B_S(X+U,X+U)=-\mu_S|X|^2-q|U|^2.
\end{equation}

\subsection{The northern disk}
Let $K_z:\operatorname{Im}\HH\to T_zS(V)$ be the infinitesimal
representation map, and choose $L\ge1$ with $\|K_z\|\le L$ everywhere.
For the representation $\R^3\oplus\HH^2$ above, $L=1$ works because
$K_{(p,w)}\xi=(0,\xi w)$.

Fix a smooth nondecreasing $\eta:\R\to[0,1]$ with $\eta=0$ on
$(-\infty,1/4]$ and $\eta=1$ on $[1/2,\infty)$, and let $C_\eta=\sup|\eta'|$.
Choose $\delta>0$, independently of $\varepsilon$, such that
\begin{equation}\label{eq:delta}
 \delta\le F_a/2,\qquad
 \delta^2\le\frac1{16L^2AF_a^2},\qquad
 \delta^3\le\frac1{8L^2AF_a(1+C_\eta)}.
\end{equation}
We want positive radial curvature and enough angular curvature
to dominate the terms involving the increasing fiber radius.
The following implicit polynomial relation makes both estimates
elementary. Define the increasing function $F$ by
\begin{equation}\label{eq:F}
 s=F+\frac{F^3}{3\delta^2},\qquad
 \ell=F_a+\frac{F_a^3}{3\delta^2}.
\end{equation}
Thus $F(\ell)=F_a$ and $F'=(1+F^2/\delta^2)^{-1}$. Set $d=r_aq$ and
\begin{equation}\label{eq:r}
 r(s)=r_a-d\int_s^\ell\eta(v/\delta)\,dv.
\end{equation}
Choose $\varepsilon$ small enough that $q\ell\le1/2$ and $d<1$, as well as
small enough for the southern metric. These are compatible choices because
$\ell$ is already fixed, $q=O(\varepsilon)$, and $d=O(\varepsilon^{3/2})$.
Then
\begin{equation}\label{eq:rcontrols}
 r_a/2\le r\le r_a,\quad r'=d\eta\ge0,\quad
 r''=d\eta'/\delta\ge0,\quad
 \frac d{r^3}\le\frac{8q}{r_a^2}\le4AF_a.
\end{equation}
On $D^n\times S^3$ use the flat-connection source metric
\begin{equation}\label{eq:northmetric}
 G_N=ds^2+F(s)^2h_{S(V)}+r(s)^2h_{S^3}.
\end{equation}
The function $F$ is odd and smooth near zero, with
$F(s)=s-s^3/(3\delta^2)+O(s^5)$, and $r$ is constant there. Thus the metric
extends smoothly over its center. It is invariant under \eqref{eq:star}.

At fiber coordinate $u=1$, a star-horizontal vector in normalized
components is
\[
 \lambda\partial_s+X+U,\qquad U=-\frac F rK_z^*X.
\]
Right translation gives the same description at every fiber point.
For two such vectors,
\begin{align}
 |\lambda V-\mu U|^2&\le L^2F^2r^{-2}|\lambda Y-\mu X|^2,\label{eq:graph1}\\
 |X\otimes V-Y\otimes U|^2&\le2L^2F^2r^{-2}|X\wedge Y|^2.\label{eq:graph2}
\end{align}
To see the second inequality, orthogonalize $X,Y$ by a determinant-one
change of the pair, which preserves both tensors involved, and apply the
operator-norm bound. The dependent case follows directly.

The doubly warped product curvature formula gives
\begin{align}\label{eq:doublewarp}
 \mathcal K_{G_N}={}&-\frac{F''}F|\lambda Y-\mu X|^2
 -\frac{r''}r|\lambda V-\mu U|^2
 +\frac{1-F'^2}{F^2}|X\wedge Y|^2\notag\\
 &+\frac{1-r'^2}{r^2}|U\wedge V|^2
 -\frac{F'r'}{Fr}|X\otimes V-Y\otimes U|^2.
\end{align}
After using \eqref{eq:graph1}--\eqref{eq:graph2}, the two base-area
coefficients remaining are
\begin{equation}\label{eq:P}
 P_{\rm rad}=-\frac{F''}F-\frac{L^2F^2r''}{r^3},\qquad
 P_{\rm ang}=\frac{1-F'^2}{F^2}-\frac{2L^2FF'r'}{r^3}.
\end{equation}
They are strictly positive, as we now verify without sampling any planes.
Put $x=F/\delta$. Direct differentiation of \eqref{eq:F} gives
\begin{equation}\label{eq:Fcurv}
 -\frac{F''}F=\frac{2}{\delta^2(1+x^2)^3},\qquad
 \frac{1-F'^2}{F^2}=\frac{2+x^2}{\delta^2(1+x^2)^2}.
\end{equation}
Furthermore
\[
 \frac{(1-F'^2)/F^2}{FF'}
 =\frac{2+x^2}{\delta^3x(1+x^2)}
 \ge\frac1{\delta^2F_a}.
\]
By \eqref{eq:delta} and \eqref{eq:rcontrols}, the angular loss in
\eqref{eq:P} is at most half the positive angular coefficient. Thus
$P_{\rm ang}>0$.

The support of $r''$ is contained in $s\le\delta/2$, where $F\le s\le\delta/2$.
There \eqref{eq:Fcurv} gives a positive radial coefficient at least
$128/(125\delta^2)$, whereas
\[
 \frac{L^2F^2r''}{r^3}
 \le L^2AF_aC_\eta\delta\le\frac1{8\delta^2}.
\]
Outside this support the radial loss is zero. Hence $P_{\rm rad}>0$ on the
whole disk. Also $r'\le d<1$, so the vertical coefficient in
\eqref{eq:doublewarp} is positive.

Projection of the horizontal graph to the base tangent space is injective.
For an independent pair, its radial and angular base areas cannot both
vanish. Equation \eqref{eq:doublewarp} is therefore positive on every
star-horizontal two-plane. At the center the metric is a product near the
fiber, and the base curvature tends to $2/\delta^2$; the same projection
argument, or the smooth limit, proves positivity there. O'Neill's formula
now gives a positively curved northern quotient disk.

One explicit lower bound is useful for keeping strictness visible. Put
\[
 T=F_a^2/\delta^2\ge4,\quad
 a_* =\frac1{\delta^2(1+T)^3},\quad
 b_* =\frac{2+T}{2\delta^2(1+T)^2},\quad
 H_* =1+\frac{4L^2F_a^2}{r_a^2}.
\]
The preceding estimates give $P_{\rm rad}\ge a_*$ and
$P_{\rm ang}\ge b_*$. The squared norm of the horizontal graph map is
at most $H_*$. Hence the quotient has
\begin{equation}\label{eq:northbound}
 \secg_{g_N}\ge H_*^{-2}\min\{a_*,b_*\}>0.
\end{equation}

\subsection{Boundary equality and smoothing}
At the north endpoint,
\[
 F(\ell)=F_a,\quad r(\ell)=r_a,\quad r'(\ell)/r_a=q,\qquad
 \mu_N=F'(\ell)/F_a>0.
\]
Both source boundary metrics, in the common north gauge, are
$F_a^2h_{S(V)}+r_a^2h_{S^3}$. Their star actions agree, so the quotient
boundaries are isometric under the prescribed attaching map $J_\beta$.

The source outward normals are $\partial_s$ and $-\partial_t$.
They are horizontal for the star quotient. For a quotient boundary tangent
vector with common horizontal lift $X+U$, the quotient second fundamental
forms are the source forms restricted to this lift. Thus
\[
 B_N=\mu_N|X|^2+q|U|^2,\qquad
 J_\beta^*B_S=-\mu_S|X|^2-q|U|^2,
\]
and consequently
\begin{equation}\label{eq:shapesum}
 (B_N+J_\beta^*B_S)(Y,Y)=(\mu_N-\mu_S)|X|^2>0
 \quad(Y\ne0).
\end{equation}
Strictness follows from $\mu_N>0>\mu_S$ and injectivity of the horizontal
graph. For example, relative to the common quotient boundary metric $h$,
\[
 B_N+J_\beta^*B_S
 \ \ge\ \frac{r_a^2}{r_a^2+L^2F_a^2}(\mu_N-\mu_S)h.
\]
Apply Reiser--Wraith \cite[Theorem A(i), $k=1$]{RW}: two positive-sectional-
curvature metrics with isometric compact boundaries and nonnegative sum
of outward second fundamental forms admit a smooth positive-sectional-
curvature gluing. Our sum is strictly positive. This proves
Proposition~\ref{prop:geometric}. For the main theorem we need a
particular smoothing which also preserves the inverse-metric formula.
We construct it in the next section.

\section{The background metric and its smoothing}
\label{sec:background}
We now define the background directly from the source data of the
previous section. We then smooth its inverse in a way which preserves
the sum of the three circle squares.

\subsection{The comparison identity}
Use the star bundle, base metrics, connection and radii $r_N,r_S$
constructed in Section~\ref{sec:disks}. Fix $a=2\pi/3$ if a particular
cut latitude is desired. Choose the parameters by Section~\ref{sec:quantitative},
with $\varepsilon<1/6$. Since $\varepsilon A<1$, their formulas give
$0<r_i^2<1/2$. Define directly
\begin{equation}\label{eq:enlarged}
 R_i^2=\frac{r_i^2}{1-r_i^2},\qquad
 \widehat q_i=h_i+R_i^2Q(\omega_i,\omega_i),\qquad i=N,S.
\end{equation}
Let $q_i$ be the star quotient. The commuting principal right action,
written as a left action using inverses, descends in quotient disk
coordinates to $\rho$ and is isometric for $q_i$. Let $K_1,K_2,K_3$
be its generators from a $Q$-orthonormal oriented Lie algebra basis and
put $T=\sum K_a^{\otimes2}$.

The inverse metric is conveniently computed on covectors. The norm
of a covector on a Riemannian quotient equals the norm of its pullback
to the source. In matrices this gives
$g_{\mathrm{quotient}}^{-1}=PG_{\mathrm{source}}^{-1}P^T$.

At a section $u=1$, in source coordinates $(v,\xi)$, the source norm is
$h_i(v,v)+R_i^2|\xi+A_i v|^2$, and the quotient differential is
$P(v,\xi)=v-K\xi$. Putting $\eta=\xi+A_iv$, the norm becomes
$h_i(v,v)+R_i^2|\eta|^2$, and the quotient differential becomes
$(I+KA_i)v-K\eta$. It follows that
\begin{equation}\label{eq:quotientinverse}
 q_i^{-1}=(I+KA_i)h_i^{-1}(I+KA_i)^T+R_i^{-2}KK^T.
\end{equation}
Since $R_i^{-2}+1=r_i^{-2}$,
\begin{equation}\label{eq:compare}
 \gamma_i^{-1}:=q_i^{-1}+T
 =(I+KA_i)h_i^{-1}(I+KA_i)^T+r_i^{-2}KK^T.
\end{equation}
Thus $\gamma_i$ is exactly the star quotient with fiber radius $r_i$.
It is the metric denoted by $g_i$ in Section~\ref{sec:disks}. The formula is an
identity of quotient cometrics and is independent of the selected gauge.

At the common boundary both $R_i$ have the same value, the north-gauge
connection vanishes, and the source boundary metrics and star actions
agree. Consequently the backgrounds match exactly under $J_\beta$.
In the common signed collar they have the form $dt^2+b(t)$; the fields
$K_a$ are tangential and independent of $t$. Smoothness at the poles
follows from the flat connection and even radial functions there.
The background is not assumed to have positive curvature.
\subsection{A general smoothing principle}
The collar argument below applies independently of the particular
sphere construction. We state its hypotheses to clarify which features
of the comparison metric are used.

\begin{theorem}[Compatible harmonic smoothing]\label{thm:abstractsmoothing}
Let a closed manifold $M^n$ be divided by a closed, two-sided,
separating hypersurface $Y$ into $M_-$ and $M_+$. Suppose a continuous
positive background $q$ is smooth on each half and has collar form
$q=dt^2+b(t)$, with $b_-(0)=b_+(0)$. Let smooth global fields
$K_1,\ldots,K_r$ generate $2\pi$-periodic flows preserving $q$ on
both halves; assume they are tangential and independent of $t$ on the
collar. Set $T=\sum_aK_a\otimes K_a$ and
$\gamma^{-1}=q^{-1}+T$. If $\gamma$ has sectional curvature at
least $\kappa_*>0$ on both closed halves, and in the signed collar
$\gamma=dt^2+h(t)$ satisfies
\[
 D:=h'(0^-)-h'(0^+)\ge0,
\]
then, for all sufficiently small $\zeta>0$, there is a smooth
background $q_\zeta$ for which
\[
 \gamma_\zeta^{-1}=q_\zeta^{-1}+T,
 \qquad \secg_{\gamma_\zeta}>\kappa_*/2.
\]
The fields and their periodic flows are unchanged, and they remain
Killing for $q_\zeta$. Moreover, $q_\zeta\to q$ uniformly and
smoothly on compact subsets of $M\setminus Y$; the same convergence
holds for $\gamma_\zeta$ and $\gamma$.
\end{theorem}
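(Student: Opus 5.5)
The plan is to smooth the inverse comparison metric $\gamma^{-1}$ in the normal coordinate $t$ of the collar, and then define $q_\zeta^{-1}:=\gamma_\zeta^{-1}-T$. Write $\gamma=dt^2+h(t)$ on the signed collar $(-t_0,t_0)\times Y$. Since the $K_a$ are tangential and independent of $t$, the identity $q^{-1}=\gamma^{-1}-T$ reads, in the collar,
\[
 b(t)^{-1}=h(t)^{-1}-T_Y,
\]
where $T_Y=\sum_aK_a\otimes K_a$ is a fixed cometric-valued tensor on $Y$, independent of $t$. Any operation on the family $t\mapsto h(t)^{-1}$ that is linear, acts in $t$ only, and fixes $t$-independent tensors therefore commutes with subtracting $T_Y$. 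This is the key structural point.

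I would take the smoothing $h_\zeta$ of the piecewise-smooth family $h$ to be the standard one from the gluing literature (Perelman, Reiser--Wraith \cite{RW}). It is obtained by modifying $h(t)$ on $|t|<\zeta$ so that the result is smooth, agrees with $h$ for $|t|\ge\zeta$, is $C^1$-close to $h$, and has second $t$-derivative equal to a large positive bump of total mass about $D\ge0$ plus bounded terms. To keep the inverse formula, I would perform the construction directly on the cometric family $k(t)=h(t)^{-1}$. Set $k_\zeta=k+\psi_\zeta$, where $\psi_\zeta$ solves $\psi''=-(\text{jump of }k')\,\delta_0$ mollified at scale $\zeta$, with the integration constants cut off so that $\psi_\zeta$ is supported in $|t|<2\zeta$ and is $O(\zeta)$ in $C^0$ and $O(1)$ in $C^1$. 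This is the ``harmonic'' correction: $k_\zeta$ is piecewise the smooth extension of $k$ plus a function whose second derivative is the mollified delta. Since $k'$ jumps by $-h^{-1}(h'(0^-)-h'(0^+))h^{-1}=-h^{-1}Dh^{-1}$, the correction added to $k$ corresponds, after inversion, to adding to $h''$ a positive semidefinite bump of mass $D$, up to $O(1)$ errors. Put $\gamma_\zeta^{-1}=dt^{\otimes 2}$-part $+\,k_\zeta$ and $q_\zeta^{-1}=\gamma_\zeta^{-1}-T$. Then $q_\zeta^{-1}=q^{-1}+\psi_\zeta$ is a small perturbation of the positive form $q^{-1}$, so it is positive for small $\zeta$, and it equals $q^{-1}$ outside the collar. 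Smoothness on both sides and convergence away from $Y$ are immediate. The $K_a$ remain Killing for $q_\zeta$ because the flows act on the $Y$ factor, preserve each $b(t)$ and hence each $h(t)$, and $\psi_\zeta$ can be chosen invariant by averaging the jump tensor, which is already invariant, over the compact group generated by the flows. The flows themselves are untouched.

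The remaining task, and the main obstacle, is the curvature estimate $\secg_{\gamma_\zeta}>\kappa_*/2$ inside the collar. I would use the Gauss--Codazzi/Riccati expressions for $\gamma_\zeta=dt^2+h_\zeta(t)$. Planes containing $\partial_t$ have curvature $-\tfrac12 h''+\tfrac14h'h^{-1}h'$. Tangential planes involve the intrinsic curvature of $h_\zeta(t)$ minus the quadratic $h'$ terms, which are $C^0$-close to those of $h(0^\pm)$ because $h_\zeta$ is $C^1$-close to the glued $h$. Mixed planes involve Codazzi terms linear in the first derivative of $h'$ along $Y$; these are $O(1)$ since $\psi_\zeta$ has bounded tangential derivatives of order one. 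The only unbounded term is $-\tfrac12 h_\zeta''$, which is large and negative semidefinite precisely because $D\ge0$; it helps, contributing a term $\ge0$ to the radial sectional numerators. The $O(1)$ cross terms are handled as in Lemma~\ref{lem:connectionbound}: a $2\times2$ Cauchy--Schwarz absorption of the mixed term into the large radial term and the near-$\kappa_*$ tangential term. That leaves, for $\zeta$ small, an error $O(\zeta)$ beyond the one-sided bounds $\ge\kappa_*$, which gives $>\kappa_*/2$. I expect the delicate point to be this step: checking that after inversion the $O(1)$ part of $h_\zeta''$ differs from the one-sided values only by $o(1)$ away from the bump direction. I would first try to verify this by writing $h_\zeta''$ explicitly from $k_\zeta''$ using $h''=-hk''h+2hk'hk'h$.
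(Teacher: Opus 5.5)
Your structural observation is right and is the one the paper uses: smooth the cometric family $t\mapsto h(t)^{-1}$ by an operation in $t$ alone that fixes $t$-independent tensors, so that it commutes with subtracting $T$ (identity \eqref{eq:commutes} for the cutoff convolution $S_\zeta$). The smoothing you write down, however, does not give a smooth metric. Put $[k']=k'(0^+)-k'(0^-)=h_0^{-1}Dh_0^{-1}\ge0$. If $\psi_\zeta''$ is a mollified multiple of $\delta_0$, then $\psi_\zeta$ is smooth and $k+\psi_\zeta$ keeps the kink of $k$. If you mean $\psi_\zeta''=[k'](\varrho_\zeta-\delta_0)$, the kink is removed, but $k_\zeta''=k''_{\mathrm{reg}}+[k']\varrho_\zeta$ still jumps wherever $k''(0^-)\ne k''(0^+)$, so $\gamma_\zeta$ is only $C^{1,1}$. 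Every one-sided jet must be blended, e.g.\ by convolving the whole family as the paper does. Your sign bookkeeping is also inconsistent. The singular part of $h_\zeta''$ must be $-\varrho_\zeta HDH\le0$ (in $h_0$-orthonormal coordinates), so that $-\tfrac12h_\zeta''$ gains $+\tfrac12\varrho_\zeta HDH\ge0$. You instead call the bump in $h''$ positive semidefinite and $-\tfrac12h_\zeta''$ negative semidefinite, which is the unfavorable sign, before concluding that it helps.

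The more serious gap is the curvature step, where your key claims fail inside the layer $|t|<\zeta$. There $h_\zeta$ is not $C^1$-close to the glued $h$: $h_\zeta'\approx\overline P=(1-w)P_-+wP_+$, which is $O(1)$ away from both $P_\pm$. For any $C^2$ smoothing, the regular part of $h_\zeta''$ must also interpolate between $Q_-$ and $Q_+$; for convolution it is $\overline Q-2w(1-w)D^2$. So the ``delicate point'' you hope to check, that the $O(1)$ part of $h_\zeta''$ stays $o(1)$-close to one-sided values away from the bump direction, is false, even on $\ker D$. Nor can these discrepancies, or the mixed block, be absorbed into the large radial term. The term $\tfrac12\varrho_\zeta HDH$ is large only on the range of $D$ and only where $\varrho_\zeta$ is large, and $D=0$ or a degenerate $D$ is allowed. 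Moreover, a block-by-block Cauchy--Schwarz with sup bounds on the mixed block does not recover the one-sided bound, which holds only for the full curvature forms on decomposable bivectors.

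The missing idea is the paper's exact decomposition. For convolution smoothing, the radial, mixed and tangential blocks are the \emph{same} $w$-weighted average of the two one-sided blocks, plus $O(\zeta)$, plus the terms $\tfrac34w(1-w)D^2+\tfrac12\varrho_\zeta HDH$ radially and $\tfrac14w(1-w)\bigwedge^2D$ tangentially. These extra terms come from $\overline{P^2}-\overline P^{\,2}=w(1-w)D^2$ and its $\bigwedge^2$ analogue, and they are nonnegative on decomposable bivectors. A convex combination of curvature forms bounded below by $\kappa_*$ on decomposable bivectors keeps that bound, so you get $\kappa_*-O(\zeta)$ with no absorption at all. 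The region $\zeta\le|t|\le2c_0/3$, where the convolution samples only one side, is then handled by the $C^2$-openness estimate of Lemma~\ref{lem:halfmargin}.
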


With the convention $\mathrm{II}(X,Y)=g(\nabla_Xn_{\rm out},Y)$,
the condition $D\ge0$ is twice the nonnegativity of the sum of the
outward second fundamental forms. No commutativity of the fields is
assumed. Effectivity is preserved if the original periodic actions are
effective.

\subsection{Smoothing the inverse metric}
\label{sec:smoothing}
The preceding topology supplies the target clutch, and the disk calculation
supplies the complete comparison estimates. In particular, the positive
constant
\begin{equation}\label{eq:kappastar}
 \kappa_*=
 \min\left\{\frac{\lambda_0\varepsilon}{2},
 \left(1+\frac{4F_a^2}{r_a^2}\right)^{-2}
 \min\left(\frac1{\delta^2(1+F_a^2/\delta^2)^3},
 \frac{2+F_a^2/\delta^2}{2\delta^2(1+F_a^2/\delta^2)^2}\right)\right\}>0
\end{equation}
is a lower bound for both comparison disks; here the representation has
$L=1$ and $\lambda_0=(M_1^2+2)^{-1}$.
We now specify the smoothing, rather than relying on an unspecified
positive smoothing that might lose the action formula.

Use a signed collar $(-c_0,c_0)\times Y$, and write the comparison corner
metric as $dt^2+h(t)$, with $h(0)=h_0$. Write
$P_\pm=h'(0^\pm)$, $Q_\pm=h''(0^\pm)$, and
$D=P_--P_+\ge0$; the last inequality is precisely twice the sum of
outward second fundamental forms already proved.
Fix an even nonnegative smooth kernel $\varrho$ supported in $(-1,1)$
with mass one, and a smooth cutoff $0\le\theta\le1$ equal to one for
$|t|\le c_0/3$ and zero for $|t|\ge2c_0/3$. Put
\[
 \varrho_\zeta(t)=\zeta^{-1}\varrho(t/\zeta),\qquad
 S_\zeta p=\theta(\varrho_\zeta*p)+(1-\theta)p,
 \qquad 0<\zeta<c_0/12.
\]
Apply this directly to the background:
\begin{equation}\label{eq:backgroundsmooth}
 q_\zeta=dt^2+\bigl[S_\zeta(b^{-1})\bigr]^{-1}
\end{equation}
on the collar, and set $q_\zeta=q_i$ elsewhere. A convex average of
positive matrices is positive, so this is a smooth global metric.
Since $T$ is tangential and independent of $t$,
\begin{equation}\label{eq:commutes}
 S_\zeta(b^{-1}+T)=S_\zeta(b^{-1})+T.
\end{equation}
Therefore
$\gamma_\zeta^{-1}=q_\zeta^{-1}+T$
is exactly the harmonic smoothing of the comparison corner metric.
The original $S^3$ action remains isometric for $q_\zeta$.
More explicitly, each action flow on the collar has the form
$\Phi_s(t,y)=(t,\phi_s(y))$. Since the inverse family $p(t)=b(t)^{-1}$ is invariant under
$\phi_s$, pullback commutes with convolution in $t$, with the scalar
cutoff $\theta$, and with matrix inversion. Thus $\Phi_s^*q_\zeta=q_\zeta$
and the circle-square sum is preserved. The approximate-identity property
gives uniform convergence of $S_\zeta p$ to $p$ on the compact
collar; away from the seam the convolution converges in every derivative.
The tensors remain uniformly positive definite, and inversion is smooth
on that set, so the smoothed metrics converge uniformly on the collar
and smoothly away from the seam. No commutativity of the flows, and no
invariance of the attaching map under each individual flow, is needed.

\subsection{Curvature at the seam}

We recall the formulas which explain the smoothing argument. For a
normal metric $dt^2+h(t)$, the radial block of the curvature numerator is
\[
 -\frac12h''+\frac14h'h^{-1}h'.
\]
On an angular plane spanned by $u,v$, the Gauss equation gives
\[
 \mathcal K(u\wedge v)=\mathcal K_{h(t)}(u\wedge v)
 -\frac14\bigl(h'(u,u)h'(v,v)-h'(u,v)^2\bigr).
\]
The mixed block is a Codazzi expression, linear in the tangential
covariant derivative of $h'$. These three blocks determine curvature
on every two-plane, including a plane with a nonzero radial component.

The derivative jump has the favorable sign. Indeed, the outward
normal on the negative half-collar is $\partial_t$, while that on the
positive half is $-\partial_t$. Hence the sum of the second
fundamental forms is $(P_--P_+)/2=D/2$. The distributional second
derivative of the inverse metric has a term with coefficient $D$.
We keep that term exactly in the computation below.

At a point of $Y$ use $h_0$-orthonormal coordinates and put
$p=h^{-1}$. Define the one-sided endomorphisms
$P_\pm=h'(0^\pm)$ and $Q_\pm=h''(0^\pm)$, so
$D=P_--P_+\ge0$. Set
\[
\begin{aligned}
 w&=\int_0^\infty\varrho_\zeta(t-s)\,ds,
 &J&=\varrho_\zeta(t),\\
 \overline P&=(1-w)P_-+wP_+,
 &\overline Q&=(1-w)Q_-+wQ_+,\\
 \overline{P^2}&=(1-w)P_-^2+wP_+^2.
\end{aligned}
\]
For $|t|\le\zeta$,
\[
 p_\zeta=I+O(\zeta),\quad
 p_\zeta'=-\overline P+O(\zeta),\quad
 p_\zeta''=2\overline{P^2}-\overline Q+JD+O(\zeta).
\]
The first estimate holds in tangential $C^2$, the second in tangential
$C^1$, and the errors are uniform. The jump term $JD$ is exact.
For $H=p_\zeta^{-1}$, inverse differentiation and
$\overline{P^2}-\overline P^{\,2}=w(1-w)D^2$ give
\[
 H'=\overline P+O(\zeta),\qquad
 H''=\overline Q-2w(1-w)D^2-JHDH+O(\zeta).
\]
The potentially unbounded $JHDH$ is retained exactly. Replacing $H$ by
$I$ in that term would leave an uncontrolled order-one error.

The radial curvature block $-H''/2+H'H^{-1}H'/4$ is
\[
 \overline{\mathcal R_{\rm rad}}+
 \frac34w(1-w)D^2+\frac12JHDH+O(\zeta).
\]
The tangential Gauss block is
\[
 \overline{\mathcal R_{\rm tan}}+
 \frac14w(1-w)\bigwedge{}^2D+O(\zeta),
\]
where $(\bigwedge^2D)(u\wedge v,u\wedge v)
=D(u,u)D(v,v)-D(u,v)^2$.
The mixed block is the Codazzi expression, linear in $\nabla^H H'$;
it is the weighted endpoint block plus $O(\zeta)$ and has no singular
term. The additional radial terms are positive semidefinite because
$D^2\ge0$ and $HDH\ge0$. The tangential contribution is nonnegative
on decomposable bivectors by the Cauchy--Schwarz inequality for $D$.
Thus all additional terms have the required sign.
For a decomposable bivector the endpoint curvature forms are each
bounded below by $\kappa_*$ times its squared $dt^2+h_0$ norm.
This argument uses positivity on decomposable bivectors only, not a
positive curvature operator.

Choose $L_c\ge1$ bounding $R^{h_0}$ and all one-sided tensors
$(\nabla^0)^i\partial_t^jp$, $i+j\le3$, in the $h_0$ norms; here
$\nabla^0$ is the product connection. Put $r=2L_c\zeta$ and assume
$r\le1/4$. For endomorphisms we use the operator norm; for covariant and
mixed tensors we use the induced multilinear norm, namely the supremum on
unit arguments. Curvature is viewed first as a $(1,3)$ tensor and then
lowered with the indicated metric. All contractions and permutations
below are estimated in these norms; the displayed numerical factors count
the terms in the corresponding formulas. No Hilbert--Schmidt norm is used
in the collar estimates.

Taylor's formula on each side, followed by convolution, gives
\[
 \|p_\zeta-I\|\le r,\quad
 \|p_\zeta'+\overline P\|\le r,\quad
 \|\nabla^0(p_\zeta'+\overline P)\|\le r,\quad
 \|\nabla^0p_\zeta\|,\ \|(\nabla^0)^2p_\zeta\|\le r.
\]
The distributional first derivative of $p$ has no atom, so
$p_\zeta'=\varrho_\zeta*p'$ and $\|p_\zeta'\|\le L_c$.
Consequently
\[
 \|H\|\le\frac43,\quad \|H-I\|\le\frac{4r}{3},\quad
 \|\nabla^0H\|\le4r,\quad \|(\nabla^0)^2H\|\le8r.
\]
For example,
$\nabla^0\nabla^0H=-H(\nabla^0\nabla^0p_\zeta)H
 +2H(\nabla^0p_\zeta)H(\nabla^0p_\zeta)H$;
the last displayed bound follows from $r\le1/4$.

The jump term is kept separate. Write
\[
 p_\zeta''=p_{\zeta,\mathrm{reg}}''+JD,\qquad
 H_{\mathrm{reg}}'':=H''+JHDH,
\]
where $p_{\zeta,\mathrm{reg}}''$ is the convolution of the ordinary
one-sided second derivatives. Let
$K_0=(1-w)p''(0^-)+wp''(0^+)$. The one-sided Taylor bounds give
\[
 \|p_{\zeta,\mathrm{reg}}''-K_0\|\le r,\qquad
 \|p_\zeta'\|,\ \|\overline P\|,\
 \|p_{\zeta,\mathrm{reg}}''\|,\ \|K_0\|\le L_c.
\]
The exact $-JHDH$ term in $H''$ is never replaced by $-JD$; doing so
would leave an uncontrolled contribution because $J$ is of order
$\zeta^{-1}$.

For an auditable radial estimate, set $A=p_\zeta'$ and
$A_0=-\overline P$. Inverse differentiation gives
\[
 H_{\mathrm{reg}}''=2HAHAH-Hp_{\zeta,\mathrm{reg}}''H.
\]
Replacing the three factors $H$ and then the two factors $A$ one at a
time gives
\[
 \|2HAHAH-2A_0^2\|
 \le2\left[3\frac{4r}{3}\left(\frac43\right)^2L_c^2
              +2\left(\frac43\right)^3L_cr\right]
 <24L_c^2r,
\]
and
\[
 \|Hp_{\zeta,\mathrm{reg}}''H-K_0\|
 \le2\frac{4r}{3}\frac43L_c+\left(\frac43\right)^2r
 <6L_c^2r.
\]
Thus $\|H_{\mathrm{reg}}''-(2\overline P^{,2}-K_0)\|
<30L_c^2r$. Since $p''(0^\pm)=2P_\pm^2-Q_\pm$ and
$(1-w)P_-^2+wP_+^2-\overline P^{,2}=w(1-w)D^2$, we have
\[
 2\overline P^{\,2}-K_0=\overline Q-2w(1-w)D^2.
\]
Also $\|H'-\overline P\|\le8L_cr$ and
$\|H'\|\le16L_c/9<2L_c$. A three-factor telescoping estimate gives
\[
 \|H'p_\zeta H'-\overline P^{\,2}\|
 \le(8L_cr)(5/4)(2L_c)+L_cr(2L_c)+L_c(8L_cr)
 =30L_c^2r.
\]
It follows that the regular error in
$-H''/2+H'p_\zeta H'/4$ is at most
$15L_c^2r+7.5L_c^2r<60L_c^2r$. The terms left over are
$\frac34w(1-w)D^2+\frac12JHDH\ge0$.

For the tangential estimate let $C=\nabla^H-\nabla^0$. The exact
connection-difference formula and its derivative give
\[
 \|C\|\le\frac32\|p_\zeta\|\|\nabla^0H\|
       \le\frac32\frac54(4r)=7.5r<8r,
\]
\[
 \|\nabla^0C\|\le\frac32\bigl(
       \|\nabla^0p_\zeta\|\|\nabla^0H\|
       +\|p_\zeta\|\|(\nabla^0)^2H\|\bigr)
       \le\frac32(4r^2+10r)\le16.5r<17r.
\]
The curvature identity
$R^H-R^{h_0}=(\nabla^0C)_{\rm alt}+(C*C)_{\rm alt}$ then yields
$\|R^H-R^{h_0}\|_{(1,3)}\le2(17r)+2(8r)^2\le66r$.
Lowering an index with $H$ costs at most $3/2$, and replacing the
lowering metric $h_0$ by $H$ in $R^{h_0}$ costs at most $2L_cr$.
Thus the intrinsic $(0,4)$ curvature error is at most
$(99+2L_c)r\le(132+2L_c)r$. Replacing the two factors $H'$ by
$\overline P$ in the Gauss term costs at most
$\frac12(\|H'\|+\|\overline P\|)\|H'-\overline P\|
\le12L_c^2r<20L_c^2r$ on a unit decomposable tangential bivector.
The remaining variance term is
$\frac14w(1-w)\bigwedge^2D\ge0$. Therefore
$(132+2L_c)r+20L_c^2r\le160L_c^2r$
bounds the full tangential error.

For the mixed block, in product coordinates the Codazzi tensor is
\[
 \mathsf C_{ijk}=\tfrac12\bigl((\nabla^H_jH')_{ik}
                              -(\nabla^H_kH')_{ij}\bigr).
\]
Tangential differentiation never differentiates $J$ or the collar
variable. Since $H'=-Hp_\zeta'H$, its full product-rule expansion is
\[
 \nabla^0H'=-(\nabla^0H)p_\zeta'H
             -H(\nabla^0p_\zeta')H
             -Hp_\zeta'(\nabla^0H).
\]
The two outer terms have total norm at most
$2(4r)L_c(4/3)<11L_cr$. In the middle term, use
$\|\nabla^0(p_\zeta'+\overline P)\|\le r$ and
$\|\nabla^0\overline P\|\le L_c$; replacing the two factors $H$ by
$I$ costs less than $4L_cr$, while the Taylor remainder costs less than
$2L_cr$. In particular,
$\|\nabla^0(H'-\overline P)\|\le50L_cr$.
The two covariant slots in the connection correction contribute at most
\[
 2\|C\|\|H'\|\le2(8r)(16L_c/9)=256L_cr/9<32L_cr.
\]
Hence the mixed Codazzi error is at most $82L_cr<90L_cr$.
This is a direct estimate of the mixed block, not an inference from the
diagonal blocks.

Finally, use the reference product metric $dt^2+h_0$. A unit
decomposable bivector can be written $\xi=dt\wedge a+b$, with $b$
decomposable and $|a|^2+|b|^2=1$. The three regular errors above give
\[
 60L_c^2r|a|^2+160L_c^2r|b|^2
       +2(90L_cr)|a||b|\le250L_c^2r.
\]
Since $r=2L_c\zeta$, this is $500L_c^3\zeta$, bounded above by the
conservative $2000n^2L_c^3\zeta$ used below. The weighted endpoint
curvature forms are each bounded below by $\kappa_*|\xi|^2$; the retained
radial jump terms and tangential variance term are nonnegative on
relevant decomposable bivectors. Thus no positivity of the full curvature
operator is assumed. Finally $\|H-I\|\le2r=4L_c\zeta$ gives the
area-squared comparison used in
\begin{equation}\label{eq:seambound}
 \secg_{\gamma_\zeta}\ge
 \frac{\kappa_*-2000n^2L_c^3\zeta}{(1+4L_c\zeta)^2}
 \qquad (|t|\le\zeta).
\end{equation}
This is the seam estimate only; the region where the cutoff varies is
handled separately below by the $C^2$ openness estimate.

\subsection{The cutoff region}
The remaining region cannot be omitted. For $|t|\ge\zeta$ the convolution samples only one smooth side. Taylor's
theorem through two derivatives, followed by the product rule for the
fixed cutoff $\theta$, gives a finite constant $C_{\rm cut}$ such that
\[
 \| (\gamma_\zeta^{-1}-\gamma_i^{-1})\gamma_i
       \|_{C^2(\gamma_i)}\le C_{\rm cut}\zeta
 \qquad (|t|\ge\zeta).
\]
Here $C_{\rm cut}$ is any finite bound obtained by converting the
product-norm Taylor estimate to the $C^2(\gamma_i)$ norm; it depends
only on the fixed cutoff and prescribed one-sided metrics through order
three. Such a bound is finite because the collar is compact and matrix
inversion is smooth on the compact family of positive matrices
encountered. Let $\mathcal R_c=\max_i\sup|R^{\gamma_i}|_{(1,3)}$.
By Lemma~\ref{lem:halfmargin}, it suffices to have
\[
 C_{\rm cut}\zeta<\min\left\{\frac18,
       \frac{\kappa_*}{2(8\mathcal R_c+1128)}\right\}.
\]
The seam estimate \eqref{eq:seambound} is greater than $\kappa_*/2$
if the numerator loss is at most $\kappa_*/4$ and
$4L_c\zeta<1/8$. We can thus take
\begin{equation}\label{eq:width}
 0<\zeta<\min\left\{\frac{c_0}{12},
 \frac1{32L_c},
 \frac{\kappa_*}{8000n^2L_c^3},
 \frac1{8C_{\rm cut}},
 \frac{\kappa_*}{2C_{\rm cut}(8\mathcal R_c+1128)}\right\}.
\end{equation}

On the seam, the cutoff region, and the unchanged complement,
respectively, the estimates above and Lemma~\ref{lem:halfmargin} imply
\begin{equation}\label{eq:positivecenter}
 \boxed{\secg_{\gamma_\zeta}>\kappa_*/2.}
\end{equation}
The choice is possible for each fixed member: all constants are finite
for its prescribed smooth data, and the seam inequality holds for all
sufficiently small positive $\zeta$. In particular, this argument does
not require a uniform collar width as $k$ varies.

\section{Non-isometric circle actions}\label{sec:circles}
The positive comparison metric satisfies
$\gamma_\zeta^{-1}=q_\zeta^{-1}+\sum K_a^{\otimes2}$.
The fields $K_a$ are Killing for the background. We replace them
by generators of conjugate circle actions. The conjugation will have
first derivative equal to the identity at a chosen point, but nonzero
second derivative. This detects non-isometry while leaving the field
values at that point unchanged.

\subsection{A general non-isometry theorem}
The local construction below works for noncommuting actions and controls
every partial sum at once.

\begin{theorem}[Gram-matrix shear]\label{thm:generalshear}
Let $(M^n,q)$ be closed and let $K_1,\ldots,K_r$, $1\le r<n$, be Killing
fields generating effective circle actions of period $2\pi$. Suppose
they are linearly independent at $p$. Then there is a locally supported
flow $\Psi_\tau$ whose conjugated fields
$W_{a,\tau}=(\Psi_\tau)_*K_a$ generate effective circle actions with
the same periods and satisfy, for every $\tau\ne0$, every subset
$S\subset\{1,\ldots,r\}$, and every $a$,
\[
 \Lie_{W_{a,\tau}}g_{S,\tau}\ne0,
 \qquad g_{S,\tau}^{-1}=q^{-1}+\sum_{b\in S}W_{b,\tau}\otimes W_{b,\tau}.
\]
The metrics converge smoothly as $\tau\to0$ to their comparison
metrics. In particular, if
$(q^{-1}+\sum_aK_a\otimes K_a)^{-1}$ has positive sectional
curvature, then the full deformed metric has positive sectional
curvature for all sufficiently small nonzero $\tau$.
\end{theorem}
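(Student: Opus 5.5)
The plan is to take $\Psi_\tau$ to be the time-$\tau$ flow of a vector field $Y$ supported in a small coordinate ball $U$ around $p$, with $Y(p)=0$ and $\nabla Y(p)=0$, but with nonzero second derivative at $p$. The conjugated fields $W_{a,\tau}=(\Psi_\tau)_*K_a$ generate the conjugate flows $\Psi_\tau\circ\phi^a_s\circ\Psi_\tau^{-1}$. These are again $2\pi$-periodic effective circle actions, since conjugation by a diffeomorphism preserves group laws, periods and kernels. This disposes of the dynamical part of the statement at once.

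To get exact formulas rather than first-order ones, I choose $Y$ as a shear. Take coordinates $x=(x_1,\dots,x_n)$ at $p$ that are $q$-orthonormal at $p$. Since $r<n$, the span of $K_1(p),\dots,K_r(p)$ has a $q$-unit normal $\nu$, and I take $x_n$ along $\nu$. Let $f$ be a function of $(x_1,\dots,x_{n-1})$ only, equal to a quadratic form $\tfrac12 B(x',x')$ near $p$, and set $Y=\chi f\,\partial_n$ with a cutoff $\chi$ that is independent of $x_n$ on a smaller ball. Then near $p$ the flow is exactly $\Psi_\tau(x)=x+\tau f(x')e_n$. Hence $W_{a,\tau}$ and its first derivatives at $p$ are affine in $\tau$:
\[
 W_{a,\tau}(p)=K_a(p),\qquad
 \partial W_{a,\tau}(p)=\partial K_a(p)+\tau\, e_n\otimes B(K_a(p),\cdot)
\]
(up to the sign convention for push-forward). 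Likewise the metrics $g_{S,\tau}$ and their first derivatives at $p$ are polynomial, in fact affine, in $\tau$. Consequently $(\Lie_{W_{a,\tau}}g_{S,\tau})(p)=L^{S,a}_0+\tau L^{S,a}_1$. Here $L^{S,a}_0$ is the comparison quantity, computed from $q$ and the $K_b$ alone. The term $L^{S,a}_1$ comes from the Hessian $B$ through $\Lie_{W}g=g(\nabla_\cdot W,\cdot)+\mathrm{sym}$.

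The key step is to choose $B$ (the ``Gram-matrix'' choice) so that, for every $S$ and $a$, the term $L^{S,a}_1$ has a nonzero component in a slot where $L^{S,a}_0$ vanishes. Then $L^{S,a}_0+\tau L^{S,a}_1\neq0$ for every $\tau\neq0$. The natural slot is the symmetric $(K_a(p),\nu)$ entry. Since $K_a$ is $q$-Killing, $L^{S,a}_0$ only involves $\Lie_{K_a}$ of the added squares, i.e.\ brackets $[K_a,K_b]$ paired against $K_b$. The $B$-term contributes roughly $g_{S}(\nu,\nu)\,B(K_a(p),K_a(p))$ in the $(K_a,K_a)$ entry, or $B(K_a,\cdot)$ against $\nu$. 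I will try $B$ positive definite on the span of the $K_a(p)$; this makes the $\tau$-coefficient a Gram-type expression $\tau\,g_S(\nu,\nu)B(K_a,K_a)>0$ with $g_S(\nu,\nu)>0$. If $L^{S,a}_0$ does not vanish in that entry, I instead evaluate the $(\nu,\nu)$ entry or use a second point on the ray. I expect this bookkeeping, verifying that the constant term cannot cancel the $\tau$-term simultaneously for all $2^r\cdot r$ pairs $(S,a)$, to be the main obstacle. The noncommutativity of the $K_a$ feeds brackets into $L_0$, and the normal direction $\nu$ supplied by $r<n$ is what lets me separate the two terms.

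Finally, as $\tau\to0$ we have $\Psi_\tau\to\mathrm{id}$ in every $C^k$ on the fixed compact support. Hence $W_{a,\tau}\to K_a$ smoothly and $g_{S,\tau}\to(q^{-1}+\sum_{b\in S}K_b\otimes K_b)^{-1}$ smoothly, because the inverse-metric formula of Lemma~\ref{lem:rankone} is smooth in the fields. Sectional curvature on the compact Grassmann bundle depends continuously on the $C^2$ jet of the metric. Therefore a positive lower bound for the comparison metric persists, for example at half its value, for all sufficiently small nonzero $\tau$.
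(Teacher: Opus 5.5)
Your overall framework is the paper's, and those parts are fine: a shear flow with vanishing $1$-jet and nonzero $2$-jet at $p$, jets at $p$ that are exactly affine in $\tau$, periods and effectivity by conjugation, and $C^2$ openness. The gap is the decisive step, which you leave unresolved. The slot you propose does not work in general.

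At $p$ one has $g_S^{-1}\nu=N$, i.e.\ $g_S(N,\cdot)=q(N,\cdot)$. One also has $\Lie_{K_a}g_S^{-1}=\sum_{b\in S}([K_a,K_b]\otimes K_b+K_b\otimes[K_a,K_b])$. Hence the $\tau$-free part of the covariant $(K_a(p),N)$ entry is
\[
 L_0^{S,a}(v_a,N)=-\sum_{b\in S}g_S(v_a,v_b)\,q\bigl(N,[K_a,K_b]_p\bigr).
\]
This is nonzero whenever the Gram matrix is not diagonal and some bracket has a component along $N$. Nothing in the hypotheses excludes this, since the $K_a$ need not span a subalgebra. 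Then $L_0+\tau L_1$ vanishes at $\tau=-L_0/L_1\ne0$. No choice of $B$ helps, because $L_0$ does not depend on $B$.

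Your fallbacks also fail. The $\tau$-part of $(\Lie_{W_{a,\tau}}g_{S,\tau})_p(X,Y)$ is $\tau[g_S(X,N)g_S(Y,\beta)+g_S(X,\beta)g_S(Y,N)]$, with $\beta=q^{-1}B(v_a,\cdot)\in\operatorname{span}\{e_1,\dots,e_{n-1}\}$. It vanishes for $X=Y=v_a$, since $g_S(v_a,N)=0$. It vanishes for $X=Y=N$, since $q(N,\beta)=0$. So your heuristic term $g_S(\nu,\nu)B(K_a,K_a)$ in the $(K_a,K_a)$ entry is not correct. The ``second point on the ray'' option is left unspecified.

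The missing idea is to test the \emph{inverse} metric against the $q$-dual basis of $E$. Because $\Psi_\tau(p)=p$ and $D\Psi_\tau(p)=I$, naturality gives $W_{b,\tau}(p)=v_b$ and $[W_{a,\tau},W_{b,\tau}]_p=[K_a,K_b]_p$. Together with $\Lie_{K_a}q^{-1}=0$, the only $\tau$-dependence of $(\Lie_{W_{a,\tau}}g_{S,\tau}^{-1})_p$ is $\Lie_{W_{a,\tau}-K_a}q^{-1}$. For the Gram choice $B(v_a,\cdot)=q_p(v_a,\cdot)$, i.e.\ the paper's $Z$, this equals $-\tau(v_a\otimes N+N\otimes v_a)$.

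Now evaluate on $(\vartheta^a,\nu)$, where $\vartheta^a(v_b)=\delta_{ab}$, $\vartheta^a|_{E^{\perp}}=0$ and $\nu=q(N,\cdot)$. The bracket sum becomes $\sum_{b\in S}\bigl(\vartheta^a([K_a,K_b]_p)\,\nu(v_b)+\delta_{ab}\,\nu([K_a,K_b]_p)\bigr)=0$, because $\nu(v_b)=0$ and $[K_a,K_a]=0$. What remains is exactly $-\tau$, for all $S$ and $a$ at once.

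Differentiating $g\,g^{-1}=\mathrm{id}$ converts this into $(\Lie_{W_{a,\tau}}g_{S,\tau})_p(U_a+\mathbf 1_{a\in S}v_a,N)=\tau$, with $U_a=q^{-1}\vartheta^a$. So the correct covariant test vector is $g_S^{-1}\vartheta^a$, not $v_a$. Your $(v_a,N)$ slot works only when $G$ is diagonal, as in the paper's application after rotating the $S^3$ basis. In the general theorem you cannot recombine the $K_a$ without losing periodicity.
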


\begin{proof}
Put $v_a=K_a(p)$ and $E=\operatorname{span}\{v_1,\ldots,v_r\}$.
Choose a $q$-unit $N\perp_q E$, and covectors $\vartheta^a$ with
$\vartheta^a(v_b)=\delta_{ab}$ and $\vartheta^a|_{E^{\perp_q}}=0$.
Set $U_a=q^{-1}\vartheta^a$ and $\nu=q(N,\cdot)$. Complete $N$ to a
basis of $E^{\perp_q}$ and choose coordinates with
$\partial_{x^a}|_p=v_a$ for $1\le a\le r$, with
$\partial_{x^{r+1}}|_p=N$, and with the remaining coordinate vectors in
$E^{\perp_q}$. The coordinate covectors $dx^a|_p$ then equal
$\vartheta^a$.
Let $G_{ab}=q_p(v_a,v_b)$, and choose a cutoff $b$ equal to one near
$p$ and supported in the coordinate chart. Define
\[
 Z=\frac12b(x)\left(\sum_{a,b=1}^rG_{ab}x^ax^b\right)
          \partial_{x^{r+1}},\qquad
 \Psi_\tau=\operatorname{Fl}_Z^\tau.
\]
Since $Z(p)=DZ(p)=0$, we have $\Psi_\tau(p)=p$,
$D\Psi_\tau(p)=I$, and $D^2\Psi_\tau(p)=\tau D^2Z(p)$. Direct
differentiation gives
\[
 W_{a,\tau}(p)=v_a,\quad
 (DW_{a,\tau}-DK_a)_p=\tau N\otimes q_p(v_a,\cdot),\quad
 (\Lie_{W_{a,\tau}}q^{-1})_p
       =-\tau(v_a\otimes N+N\otimes v_a).
\]
Naturality of brackets gives
$[W_{a,\tau},W_{b,\tau}]_p=[K_a,K_b]_p$. Hence
\[
 (\Lie_{W_{a,\tau}}g_{S,\tau}^{-1})_p
 =-\tau(v_a\otimes N+N\otimes v_a)
 +\sum_{b\in S}([K_a,K_b]_p\otimes v_b
                       +v_b\otimes[K_a,K_b]_p).
\]
Evaluation on $(\vartheta^a,\nu)$ kills the bracket sum: the only
possible term has $b=a$, and $[K_a,K_a]=0$. The value is $-\tau$.
At $p$, $g_{S,\tau}^{-1}\vartheta^a=U_a+\mathbf1_{a\in S}v_a$
and $g_{S,\tau}^{-1}\nu=N$. Differentiating the inverse identity
therefore gives
\[
 (\Lie_{W_{a,\tau}}g_{S,\tau})_p
 (U_a+\mathbf1_{a\in S}v_a,N)=\tau\ne0.
\]
Conjugation preserves effectivity and periods. Smooth convergence
follows from smooth dependence of the flow and inversion of positive
tensors. Finally, positive sectional curvature is open in the $C^2$
topology on a compact manifold.
\end{proof}

\begin{proposition}[The fixed-field cone]\label{prop:fixedcone}
Let $K_1,\ldots,K_r$ be Killing fields on $(M,q)$ that are linearly
independent at $p$. For $A\in\operatorname{Sym}_r$, define
\[
 T_A=\sum_{i,j}A_{ij}K_i\otimes K_j,
 \qquad
 \mathcal C_K=\{T_A:A\ge0\}.
\]
Then $A\mapsto T_A$ is injective and $T_A\ge0$ on $M$ if and only if
$A\ge0$. Thus $\mathcal C_K$ has dimension $r(r+1)/2$. Evaluation
at $p$ identifies it with the face of the pointwise positive
semidefinite cone consisting of tensors supported on
$E_p=\operatorname{span}\{K_i(p)\}$. If, in addition, the
hypotheses of Theorem~\ref{thm:generalshear} hold, then the conjugation
constructed there satisfies
\[
 T_\tau:=\sum_aW_{a,\tau}\otimes W_{a,\tau}
 \notin\{T_A:A\in\operatorname{Sym}_r\}\quad(\tau\ne0).
\]
\end{proposition}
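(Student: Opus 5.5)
The plan is to treat the four assertions in order, working pointwise at $p$ and using only the linear independence of $v_i=K_i(p)$. The conjugation statement is then reduced to a first-order computation at $p$, which is already contained in the proof of Theorem~\ref{thm:generalshear}.

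\emph{Injectivity and positivity.} At $p$, let $V$ be the matrix whose columns are the $v_i$. Then $T_A(p)(\alpha,\alpha)=a^TAa$ with $a_i=\alpha(v_i)$. Since the $v_i$ are independent, the map $\alpha\mapsto a$ from $T_p^*M$ onto $\R^r$ is surjective. Hence $T_A(p)=0$ forces $A=0$, which gives injectivity of $A\mapsto T_A$. The same surjectivity shows that $T_A(p)\ge0$ already forces $A\ge0$. Conversely, if $A\ge0$, write $A=B^TB$. Then
\[
 T_A=\sum_k\Bigl(\sum_iB_{ki}K_i\Bigr)^{\otimes2},
\]
which is pointwise positive semidefinite on all of $M$.

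\emph{Dimension and the face.} The positive semidefinite cone in $\operatorname{Sym}_r$ has nonempty interior, and its image under an injective linear map has the same dimension, namely $r(r+1)/2$. For the face claim, note that a positive semidefinite form $S$ on $T_p^*M$ is supported on $E_p$ exactly when it has the form $VAV^T$ with $A\ge0$: restrict $S$ to the dual coordinates $a$ and use the surjectivity above. These forms make up the standard face of the pointwise cone: if a sum of two positive semidefinite forms vanishes on the annihilator of $E_p$, then each summand does. Evaluation at $p$ is therefore a bijection of $\mathcal C_K$ onto this face.

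\emph{Non-membership of $T_\tau$.} Suppose $T_\tau=T_A$ for some symmetric $A$.
\begin{itemize}
 \item Evaluating at $p$, and using $W_{a,\tau}(p)=v_a$ from the proof of Theorem~\ref{thm:generalshear}, gives $VV^T=VAV^T$. Injectivity then gives $A=I$, so $T_\tau=\sum_aK_a\otimes K_a$ on $M$.
 \item The difference $\Delta=T_\tau-\sum_aK_a^{\otimes2}$ vanishes at $p$. Its first coordinate derivative at $p$ is therefore a well-defined tensor, independent of the chart, and it must vanish.
 \item By the derivative formula $(DW_{a,\tau}-DK_a)_p=\tau N\otimes q_p(v_a,\cdot)$ in the chosen coordinates, differentiating $\Delta$ in the direction $v_c$ gives
\[
 \partial_{v_c}\Delta(p)=\tau\sum_aG_{ac}\,(N\otimes v_a+v_a\otimes N).
\]
 \item Since $N\notin E_p$ and the $v_a$ are independent, the symmetric tensors $N\otimes v_a+v_a\otimes N$ are linearly independent. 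Since $G$ is nondegenerate, the coefficient column $(G_{ac})_a$ is nonzero for each $c$. So this derivative is nonzero when $\tau\ne0$, a contradiction.
\end{itemize}

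The main obstacle is this last first-order step. One has to justify that comparing coordinate derivatives is legitimate, which holds because $\Delta(p)=0$. One also has to track correctly the derivative of each $W_{a,\tau}$ supplied by the shear $Z$. The positivity and injectivity parts are routine linear algebra.
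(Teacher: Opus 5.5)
Your proof is correct and follows essentially the same route as the paper. The pointwise factorization $T_A(p)=VAV^T$ with a surjective dual handles injectivity, positivity and the face, and non-membership reduces to $A=I$ followed by a first-order jet computation at $p$ using $(DW_{a,\tau}-DK_a)_p=\tau N\otimes q_p(v_a,\cdot)$. The only cosmetic difference is the differentiation direction: the paper uses $U_j=q^{-1}\vartheta^j$, which isolates the single term $\tau(N\otimes v_j+v_j\otimes N)$, while your direction $v_c$ gives $\tau\sum_aG_{ac}(N\otimes v_a+v_a\otimes N)$ and therefore needs the (valid) linear independence of these symmetric tensors and the nondegeneracy of $G$.
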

\begin{proof}
Let $K_p:\R^r\to T_pM$ send the standard basis to $K_i(p)$. It is
injective, so its dual $K_p^*$ is surjective, and
$T_A(p)=K_pAK_p^*$. This proves injectivity and the implication
$T_A\ge0\Rightarrow A\ge0$; the converse follows from the same
factorization at every point. It also shows that the image at $p$ is
exactly all positive semidefinite tensors supported on $E_p$, a face of
the pointwise cone.

For the last claim, $T_\tau(p)=T_I(p)$, so equality $T_\tau=T_A$
would force $A=I$. The tensor difference vanishes at $p$, and its
covariant derivative is independent of the connection there. With
$U_j=q^{-1}\vartheta^j$, the shear jet gives
\[
 \nabla_{U_j}(T_\tau-T_I)|_p
       =\tau(N\otimes v_j+v_j\otimes N)\ne0,
\]
contradicting $T_\tau=T_I$.
\end{proof}

\subsection{The conjugated actions}
Choose an interior free-orbit point $p$ of the northern linear action,
outside the smoothing collar. Such points exist since $w\ne0$ in
$\R^3\oplus\HH^2$ gives trivial stabilizer. Rotate the oriented
$Q$-orthonormal Lie algebra basis to diagonalize the orbit metric at $p$,
and write $v_a=K_a(p)$. Choose a $q_\zeta$-unit vector $N$ orthogonal
to the orbit. Complete $N$ to a basis of the orthogonal complement of
the orbit and choose coordinates with $\partial_{x^a}|_p=v_a$ for
$1\le a\le3$, $\partial_{x^4}|_p=N$, and the remaining coordinate
vectors in that complement. Then $dx^a|_p=\vartheta^a$, where
$\vartheta^a$ is dual to $v_a$ on the orbit and vanishes on its
orthogonal complement. Put $G_{ab}=(q_\zeta)_p(v_a,v_b)$.
Let $b$ be a supported smooth cutoff in this coordinate ball, equal to
one near $p$. Define the global smooth vector field
\[
 Z=\frac12b(x)\left(\sum_{a,b=1}^3G_{ab}x^ax^b\right)\partial_{x^4}
\]
by zero extension, and let $\Psi_\tau=\operatorname{Fl}_Z^\tau$.
Compactness makes this a diffeomorphism for every real $\tau$; it equals
the identity outside the ball. Define genuine circle actions by
\begin{equation}\label{eq:circles}
 \mathcal A_{a,\tau}(s,x)=
 \Psi_\tau\bigl(\rho(\exp(se_a))\Psi_\tau^{-1}(x)\bigr),
 \qquad W_{a,\tau}=(\Psi_\tau)_*K_a.
\end{equation}
Their periods and group laws hold identically, by conjugation. Here $e_a$ denotes a unit vector in the chosen Lie algebra basis,
corresponding to a unit imaginary quaternion; its period is $2\pi$.

Set $T_\tau=\sum_aW_{a,\tau}^{\otimes2}$ and define
\begin{equation}\label{eq:targetmetric}
 g_\tau^{-1}=q_\zeta^{-1}+T_\tau.
\end{equation}
This is positive definite for every $\tau$, since $q_\zeta^{-1}>0$.
The smallness condition below is needed only for curvature.

\subsection{A stability estimate used in smoothing}
We retain a quantitative openness estimate for the collar smoothing step.
The proof of positivity for the final non-isometric-action metric will be
given separately by the curvature formula below.

\begin{lemma}[Quantitative inverse-metric estimate]\label{lem:halfmargin}
Suppose $\secg_h\ge\kappa>0$. Define
\[
 \mathcal R=\sup|R^h|_{(1,3)},\qquad
 \eta=\|(g^{-1}-h^{-1})h\|_{C^2(h)}.
\]
If
\[
 \eta<\min\left\{\frac18,\frac{\kappa}{8\mathcal R+1128}\right\},
\]
then $\secg_g>\kappa/2$ on all two-planes.
\end{lemma}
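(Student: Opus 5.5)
We compare $g$ with $h$ pointwise, using the endomorphism $E=(g^{-1}-h^{-1})h$. By hypothesis the $h$-norms of $E$, $\nabla^hE$ and $(\nabla^h)^2E$ are all at most $\eta$. The argument has three steps.

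\textbf{Step 1 (the metric and its inverse).} Since $\|E\|\le\eta<1/8$, the form $g^{-1}h=I+E$ is invertible with $\|(I+E)^{-1}\|\le 8/7$. Hence $g=h(I+E)^{-1}$ and
\[
 g-h=-h(I+E)^{-1}E,\qquad |g-h|_h\le\tfrac87\eta .
\]
Differentiating $(I+E)^{-1}$ gives $|\nabla^h(g-h)|_h\le c_1\eta$ and $|(\nabla^h)^2(g-h)|_h\le c_2\eta$. The constants $c_1,c_2$ are explicit, coming from the Neumann series with $\eta<1/8$ (for instance $c_1\le 2$ and $c_2\le 6$). We work throughout in operator/multilinear norms.

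\textbf{Step 2 (connection and curvature).} Let $C=\nabla^g-\nabla^h$. The Koszul formula gives
\[
 C=\tfrac12 g^{-1}\bigl(\nabla^h(g-h)\bigr)_{\rm sym},\qquad \|C\|\le\tfrac32\|g^{-1}h\|\,c_1\eta .
\]
A similar bound holds for $\nabla^hC$, linear in $\eta$, using $(\nabla^h)^2(g-h)$ together with a quadratic term bounded by $\eta^2\le\eta/8$. Then
\[
 R^g-R^h=(\nabla^hC)_{\rm alt}+(C*C)_{\rm alt},
\]
so $\|R^g-R^h\|_{(1,3)}\le c_3\eta$ with an explicit numerical $c_3$. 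Lowering an index converts this to $(0,4)$ tensors:
\[
 \mathcal K_g-\mathcal K_h=g(R^g-R^h)+(g-h)R^h .
\]
On an $h$-unit decomposable bivector this yields
\[
 |\mathcal K_g-\mathcal K_h|\le \tfrac87c_3\eta+\tfrac87\mathcal R\eta .
\]
The target is to make this loss at most $(\mathcal R+141)\eta$, up to the factor needed below. This is where the constants $8\mathcal R$ and $1128=8\cdot141$ should come from.

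\textbf{Step 3 (normalization).} Area-squared comparison gives
\[
 |X\wedge Y|_g^2\le(1+\tfrac87\eta)^2|X\wedge Y|_h^2\le\tfrac{64}{49}\cdot\tfrac{81}{64}\,|X\wedge Y|_h^2 .
\]
Therefore
\[
 \secg_g\ge\frac{\kappa-(\mathcal R+141)\eta}{(1+\tfrac87\eta)^2}.
\]
The hypothesis $\eta<\kappa/(8\mathcal R+1128)$ gives a numerator of at least $\tfrac78\kappa$. The denominator is less than $(8/7)^2$. Thus $\secg_g>\tfrac{49}{64}\cdot\tfrac78\,\kappa>\kappa/2$.

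The main obstacle is Step 2. The bookkeeping must produce a numerical constant, uniform in dimension, for the second-derivative term in $\nabla^hC$ and the lowered curvature difference. That requires careful use of operator rather than Hilbert--Schmidt norms, so that contractions do not introduce factors of $n$. I would follow the same term-by-term telescoping used in the seam estimate of Section~\ref{sec:smoothing}, where similar constants ($66r$, $132r$) were obtained. The choice $1128$ should leave ample room for this.
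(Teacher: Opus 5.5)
Your proposal is correct and is essentially the paper's proof. Both bound $a=g-h$ and its first two $\nabla^h$-derivatives by inverse differentiation, control $C=\nabla^g-\nabla^h$ and $\nabla^hC$ through the Koszul difference formula, use $R^g-R^h=(\nabla^hC)_{\rm alt}+(C*C)_{\rm alt}$, lower an index, and compare areas. The bookkeeping you left open closes comfortably: your $c_1\le2$ and $c_2\le6$ give $\|C\|\le\frac{27}{8}\eta$ and $\|\nabla^hC\|\le\frac{21}{2}\eta$, hence $c_3<24$, well inside the paper's $141$.

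One slip needs fixing. Your Step~2 loss $\frac87(c_3+\mathcal R)\eta$ is not bounded by $(\mathcal R+141)\eta$ when $\mathcal R$ is large. Budget as the paper does instead: $\frac87(c_3+\mathcal R)\eta\le(2\mathcal R+282)\eta<\kappa/4$. With the area factor $(1-\eta)^{-2}<\frac{64}{49}$ this gives $\secg_g>\frac{147}{256}\kappa>\kappa/2$.
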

\begin{proof}
Write $a=g-h$. Inverse differentiation gives
$|a|\le2\eta$, $|\nabla a|\le4\eta$, $|\nabla^2a|\le12\eta$.
The exact connection difference formula is
\[
 2g(C(X,Y),Z)=(\nabla^h_Xa)(Y,Z)+(\nabla^h_Ya)(X,Z)
                         -(\nabla^h_Za)(X,Y).
\]
Using it and differentiating once gives for
$C=\nabla^g-\nabla^h$ the bounds $|C|\le9\eta$ and
$|\nabla C|\le30\eta$. Hence
$|R^g-R^h|_{(1,3)}\le60\eta+162\eta^2\le141\eta$.
For an $h$-orthonormal pair, the sectional numerator changes by at most
$(2\mathcal R+282)\eta<\kappa/4$ and the $g$-area squared is at most
$(1-\eta)^{-2}\le64/49$. Thus
$\secg_g>147\kappa/256>\kappa/2$.
\end{proof}

\subsection{A local coordinate calculation for curvature}
\label{subsec:coordinatecurvature}
We derive the local identity used to control curvature after conjugating
our circle actions. The argument uses only the Koszul formula and is
valid in every coordinate chart; the manifold need not be homogeneous.

Let $U\subset\mathbb R^n$ be a coordinate ball with Euclidean inner
product $\langle\ ,\ \rangle_0$. Write
$g(B,C)=\langle AB,C\rangle_0$, where $A$ is a positive symmetric
matrix field, and let $X,Y$ be constant vector fields. Define
\[
 \widetilde U(B,C)=\frac12\bigl((DB)^TC+(DC)^TB\bigr),
\]
where $D$ denotes Euclidean differentiation. Also put
\[
 c(X,Y)=\frac12\bigl((D_XA)Y+(D_YA)X-
                         \operatorname{grad}_0 g(X,Y)\bigr).
\]
For each constant vector field $Z$, Koszul's formula gives
\[
 2g(\nabla^g_XY,Z)=D_Xg(Y,Z)+D_Yg(X,Z)-D_Zg(X,Y)
                 =2\langle c(X,Y),Z\rangle_0,
 \qquad \nabla^g_XY=A^{-1}c(X,Y).
\]
For constant fields, torsion freeness gives $\nabla_XY=\nabla_YX$.
Metric compatibility and the curvature convention therefore give
\begin{align*}
g(R(X,Y)Y,X)={}&Xg(\nabla_YY,X)-Yg(\nabla_XY,X)\\
 &-g(\nabla_YY,\nabla_XX)+g(\nabla_XY,\nabla_XY).
\end{align*}
Koszul's formula also gives
$g(\nabla_YY,X)=D_Yg(X,Y)-\tfrac12D_Xg(Y,Y)$ and
$g(\nabla_XY,X)=\tfrac12D_Yg(X,X)$. Substitution, together with
$\nabla_XY=A^{-1}c(X,Y)$, yields
\begin{align*}
\mathcal K_g(X\wedge Y)={}&D_XD_Yg(X,Y)
 -\tfrac12D_X^2g(Y,Y)-\tfrac12D_Y^2g(X,X)\\
 &+\langle A^{-1}c(X,Y),c(X,Y)\rangle_0
 -\langle A^{-1}c(X,X),c(Y,Y)\rangle_0.
\end{align*}
For constant $X,Y$, the definition of $\widetilde U$ implies
$2\widetilde U(AX,Y)=\operatorname{grad}_0g(X,Y)$. Thus
$c(X,Y)=V/2$, where we define
\[
\begin{aligned}
 V={}&(D_XA)Y+(D_YA)X\\
    &-2\widetilde U(AX,Y).
\end{aligned}
\]
We also have $c(X,X)=(D_XA)X-\widetilde U(AX,X)$. Expanding the derivatives of
$g(X,Y)=\langle AX,Y\rangle_0$ now yields the coordinate identity
\begin{align}\label{eq:coordinatecurvature}
 \mathcal K_g(X\wedge Y)={}&
 \frac12\langle(D_XD_YA+D_YD_XA)X,Y\rangle_0
 -\frac12\langle(D_X^2A)Y,Y\rangle_0
 -\frac12\langle(D_Y^2A)X,X\rangle_0 \notag\\
 &-\left\langle A^{-1}\bigl((D_YA)Y-\widetilde U(AY,Y)\bigr),
                    (D_XA)X-\widetilde U(AX,X)\right\rangle_0 \notag\\
 &+\frac14\left\langle A^{-1}V,V\right\rangle_0.
\end{align}
Here $X,Y$ are test fields spanning a plane; they need not generate the
circle actions used to construct $g$. On overlaps the displayed
expressions agree because each is the intrinsic curvature numerator.

In a coordinate chart write $Q$ for the matrix of $q_\zeta$, and let $W$
be the matrix whose columns are $W_{1,\tau},W_{2,\tau},W_{3,\tau}$. The
Woodbury identity gives the covariant matrix of the final metric as
\begin{equation}\label{eq:woodbury}
 G_\tau=Q-QW(I+W^TQW)^{-1}W^TQ.
\end{equation}
Thus the matrix $A_\tau$ in \eqref{eq:coordinatecurvature} is $G_\tau$.
For explicit parameter dependence, put $B_\tau=I+W^TQW$. Then
\[
 \dot B_\tau=\dot W^TQW+W^TQ\dot W,\qquad
 \dot A_\tau=-Q\dot W B_\tau^{-1}W^TQ
 +QW B_\tau^{-1}\dot B_\tau B_\tau^{-1}W^TQ
 -QW B_\tau^{-1}\dot W^TQ.
\]
Formula \eqref{eq:coordinatecurvature} computes the curvature numerator
of the actual non-isometric-action metric. Its derivative in $\tau$ uses
spatial derivatives through order two of the displayed $\dot A_\tau$;
these are bounded on compact coordinate charts.

On $|\tau|\le1$, the vector fields $W_{a,\tau}$, the matrices
$A_\tau$, and their derivatives through order two depend smoothly on
$(\tau,p)$. Moreover $A_\tau$ is positive definite, since it is the
matrix of the metric defined by
$q_\zeta^{-1}+\sum W_{a,\tau}\otimes W_{a,\tau}$. The sectional-curvature
variation is therefore a smooth function on the compact bundle
$\operatorname{Gr}_2(TM)$ of unoriented two-planes over the closed
manifold. Define the finite constant
\[
 C_{\rm coord}=1+\sup_{|\tau|\le1,\ (p,\Pi)\in\operatorname{Gr}_2(TM)}
       \left|\frac{d}{d\tau}\sec_{g_\tau}(p,\Pi)\right|,
\]
where the derivative can be evaluated in any local chart by differentiating
\eqref{eq:coordinatecurvature} and the area denominator. This definition
is coordinate independent because sectional curvature is intrinsic.
The mean-value theorem gives
\begin{equation}\label{eq:curvaturevariation}
 |\sec_{g_\tau}(p,\Pi)-\sec_{g_0}(p,\Pi)|
 \le C_{\rm coord}|\tau|.
\end{equation}
At $\tau=0$, $g_0=\gamma_\zeta$ and $\sec_{g_0}>\kappa_*/2$. Choose
\begin{equation}\label{eq:tauchoice}
 0<\tau<\min\left\{1,\frac{\kappa_*}{4C_{\rm coord}}\right\}.
\end{equation}
Then \eqref{eq:curvaturevariation} proves
\begin{equation}\label{eq:finalcurvature}
 \boxed{\secg_{g_\tau}>\kappa_*/4>0.}
\end{equation}
This estimate covers every point and every two-plane, including the
support and transition region of the conjugating flow.

\subsection{Non-isometry for every partial sum}
The chosen point and Gram shear are those in
Theorem~\ref{thm:generalshear}; the conjugation is supported in a ball
disjoint from the smoothing collar. Its exact conclusion gives, for
every $S\subset\{1,2,3\}$ and $a=1,2,3$,
\begin{equation}\label{eq:noniso}
 (\Lie_{W_{a,\tau}}g_{S,\tau})_p
 (U_a+\mathbf1_{a\in S}v_a,N)=\tau\ne0.
\end{equation}
Thus each action is non-isometric for the incoming metric in any
ordering, as well as for the initial background and the final metric.
The conjugated $S^3$ action is also non-isometric for the final metric.
Indeed, $T_\tau$ is invariant under this action by $\Ad$-invariance. If
the action were isometric for $g_\tau$, then
$g_\tau^{-1}=q_\zeta^{-1}+T_\tau$ would force it to preserve
$q_\zeta$. This contradicts \eqref{eq:noniso} with $S=\varnothing$.

\subsection{Proof of the main theorem}
\begin{proof}[Proof of Theorem~\ref{thm:main}]
Fix an arbitrary $k\in12\Z$. The bundle, gauge family, and surgery
lemmas produce the equivariant two-disk presentation, and the
characteristic-number calculation gives $\mu(\Sigma_k)=k/5952$.
The comparison quotient metrics are
proved positive on both whole disks, with exact marked boundary equality
and positive sum of outward second fundamental forms. Equations
\eqref{eq:enlarged}, \eqref{eq:backgroundsmooth} prescribe the background;
\eqref{eq:commutes} makes the comparison inverse globally exact.
The seam and cutoff estimates give \eqref{eq:positivecenter} for a
width satisfying \eqref{eq:width}. The circles \eqref{eq:circles} and
the positive parameter \eqref{eq:tauchoice} then give
\eqref{eq:main}, \eqref{eq:finalcurvature}, and \eqref{eq:noniso}.
Set $q_k=q_\zeta$ and $W_{a,k}=W_{a,\tau}$.
The definite parameter choices in Theorem~\ref{thm:explicitbound}
below give \eqref{eq:mainexplicit}. Every input defining the background
has been selected from the bundle, connection and scalar profiles;
no positive filling is assumed as an input.
\end{proof}

\section{An explicit positive lower bound}\label{sec:quantitative}\label{app:quant}
We now give definite parameters. The purpose is to express a positive
lower bound in terms of two norms of a fixed connection, rather than
to optimize its size.

\subsection{Fixed smooth cutoffs}
We use the following fixed scalar functions. Set
\[
 e(t)=\begin{cases}\exp(-1/t),&t>0,\\0,&t\le0,\end{cases}
 \qquad H(t)=\frac{e(t)}{e(t)+e(1-t)}.
\]
Then take $a=2\pi/3$, $\eta(v)=H(4v-1)$ and
$\chi(t)=H(3(t-a)/(\pi-a)-1)$ in the disk construction.
Use the mass-one normalization of $e(1-t^2)$ for $\varrho$, and take
$\theta(t)=1-H((9t^2/c_0^2-1)/3)$ in the collar. All have precisely
the support and plateau properties used above.
Choose $c_0$ small enough that the collar avoids the northern point
with $s=\ell/2$, angular coordinate $(0,(1,0))\in\R^3\oplus\HH^2$;
this is a specified free-orbit point. In a sufficiently small adapted
coordinate ball of radius $d$ around it one may take
$b(x)=1-H((4|x|^2/d^2-1)/3)$.
The remaining positive scalar parameters may always be fixed as half
the minimum of their displayed upper bounds. The finite derivative
suprema of the chosen clutching representative are input-dependent
constants, not additional existence hypotheses.

\subsection{The disk parameters}
Here are explicit choices for the existence parameter once the smooth
clutching map and cutoff connection have been selected. Fix
$a\in(\pi/2,\pi)$, write $c=|\cos a|$, $F_a=\sin a$, and set
\[
 \Lambda=M_0^2+2M_1^2+2,\qquad A=\Lambda/c.
\]
Put $\lambda_0=(M_1^2+2)^{-1}$, a positive lower bound for the smallest eigenvalue of
\[
 Q_0=\begin{pmatrix}1&-M_1&0\\-M_1&M_1^2+1&0\\0&0&1\end{pmatrix}.
\]
Choose $\delta$ by \eqref{eq:delta} and $\ell$ by \eqref{eq:F}. Define
\begin{align*}
 C_1&=\tfrac94M_0^2+A(M_1+2M_0)+\tfrac32(M_0+M_0^2),\\
 C_2&=\tfrac14A^2+\tfrac32AM_0^2+A(M_1+2M_0)+AM_0,\\
 C_3&=A+\tfrac14A^2+\tfrac32(M_0+M_0^2)+AM_0,\qquad
 C=\max\{C_1,C_2,C_3\}.
\end{align*}
Any positive $\varepsilon$ strictly below
\begin{equation}\label{eq:explicit-epsilon}
 \min\left\{\frac16,\frac1A,\left(\frac{\lambda_0}{2C}\right)^2,
 \frac1{AF_a\ell},\left(\frac1{2AF_a}\right)^{2/3}\right\}
\end{equation}
satisfies all smallness conditions required of $\varepsilon$ in the disk
construction.

To verify the southern condition, put $E=e^{\varepsilon\phi}$, so that
$1\le E<3$, $\sqrt E<2$, $E-1\le3\varepsilon A$, and
$\sqrt E-1\le\varepsilon A$. In the variables $(H,M,V)$ used in
Lemma~\ref{lem:shrinking}, a coefficient matrix for the lower bound has
entries
\begin{align*}
 Q_{11}&=1-\tfrac34\varepsilon E M_0^2,\\
 Q_{22}&=\tfrac\Lambda2-\tfrac\varepsilon4A^2-\tfrac12EM_0^2,\\
 Q_{33}&=E^{-1}-\tfrac{\varepsilon^3}4A^2,\\
 Q_{12}&=-\sqrt E(M_1+\varepsilon AM_0),\\
 Q_{13}&=-\tfrac{\sqrt\varepsilon}{2}(3M_0+\varepsilon EM_0^2),\\
 Q_{23}&=-\tfrac{\varepsilon^{3/2}}2\sqrt E\,AM_0.
\end{align*}
For the diagonal entries, the actual curvature estimate is bounded below
by the displayed entries; for the off-diagonal entries, their negative
values bound the possible cross terms because $H,M,V\ge0$.
The row-sum norm bound gives
\[
 \|Q-Q_0\|_{\rm op}\le C\sqrt\varepsilon<\lambda_0/2.
\]
Thus the southern source and its star quotient satisfy the explicit bound
\begin{equation}\label{eq:southquant}
 \secg\ge\lambda_0\varepsilon/2>0.
\end{equation}
Here $H^2+M^2+V^2\ge\varepsilon(h^2+m^2+v^2)$ for
$0<\varepsilon\le1$ was used. The other two conditions in
\eqref{eq:explicit-epsilon} give $q\ell<1/2$ and $d<1/2$, respectively.

With $a_*,b_*,H_*$ from \eqref{eq:northbound}, set
\[
 \kappa_* =\min\left\{\lambda_0\varepsilon/2,
                  H_*^{-2}\min(a_*,b_*)\right\}>0.
\]

These are bounds for the two comparison disks. The compatible harmonic
smoothing in Section~\ref{sec:smoothing} and the non-isometric deformation
in Section~\ref{sec:circles} give the final closed-manifold estimate.

\subsection{Eliminating the small-parameter choices}
We first simplify the bound already obtained. Put $a=2\pi/3$, so
$F_a^2=3/4$, and let $R=r_a^2=\varepsilon e^{\varepsilon A/2}$.
For $T=F_a^2/\delta^2\ge0$, the constants in
\eqref{eq:northbound} satisfy
\[
 \frac{b_*}{a_*}=\frac{(2+T)(1+T)}2\ge1.
\]
It follows from \eqref{eq:finalcurvature} that
\begin{equation}\label{eq:refinedbound}
 \secg_g>
 \min\left\{\frac{\varepsilon}{8(M_1^2+2)},
 \frac{\delta^4}{4(\delta^2+3/4)^3}
            \left(\frac{R}{R+3}\right)^2\right\}.
\end{equation}
This expression is useful if numerical estimates for the input data
are available. A simpler conservative bound is sufficient here.

\begin{lemma}\label{lem:simplebound}
For the parameter choices of the construction, with $a=2\pi/3$,
\begin{equation}\label{eq:simplebound}
 \secg_g>\frac{\varepsilon^2\delta^4}{64}>0.
\end{equation}
\end{lemma}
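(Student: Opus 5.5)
The plan is to start from the refined bound \eqref{eq:refinedbound}, which is already a strict lower bound for $\secg_g$. It then suffices to show that each of the two entries of the minimum is at least $\varepsilon^2\delta^4/64$. Only crude size information about the parameters is needed: $a=2\pi/3$ (so $c=1/2$ and $F_a^2=3/4$), the first constraint $\delta\le F_a/2$ in \eqref{eq:delta}, and the smallness conditions in \eqref{eq:explicit-epsilon}.

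For the first entry, I would show $\varepsilon\delta^4(M_1^2+2)<8$. From \eqref{eq:delta}, $\delta^2\le F_a^2/4=3/16$, so $\delta^4<1/16$. From \eqref{eq:explicit-epsilon}, $\varepsilon<(\lambda_0/(2C))^2$ with $\lambda_0=(M_1^2+2)^{-1}$. Moreover $C\ge C_3\ge A=\Lambda/c\ge 4$, since $\Lambda\ge2$. Hence $\varepsilon(M_1^2+2)<\lambda_0/(4C^2)<1$, and the first entry satisfies
\[
 \frac{\varepsilon}{8(M_1^2+2)}>\frac{\varepsilon^2\delta^4}{64}.
\]

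For the second entry, I would treat the two factors separately.

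\emph{The $\delta$-factor.} Since $\delta^2\le 3/16$, we have $\delta^2+3/4\le 15/16<1$. Therefore $\delta^4/(4(\delta^2+3/4)^3)>\delta^4/4$.

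\emph{The $R$-factor.} Since $R=\varepsilon e^{\varepsilon A/2}$ and $\varepsilon A<1$, we get $\varepsilon\le R<2\varepsilon<1$. Hence $R+3<4$, so $R/(R+3)\ge R/4\ge\varepsilon/4$.

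Multiplying the two factors shows that the second entry exceeds $\delta^4\varepsilon^2/64$. Combining both entries with \eqref{eq:refinedbound} gives \eqref{eq:simplebound}. Positivity is immediate from $\varepsilon,\delta>0$.

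No step is a genuine obstacle. The only point needing care is the first entry: $\delta$ and $M_1$ are not directly related, so the argument must use the $\varepsilon$-smallness through $\lambda_0$ and $C\ge A$, rather than any relation between $\delta$ and the connection norms.
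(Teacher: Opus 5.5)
Your proposal is correct and follows essentially the paper's route. Both arguments bound the two entries of \eqref{eq:refinedbound} separately, and for the second entry both use $\delta^2\le 3/16$, $R\ge\varepsilon$ and $\varepsilon A<1$. The only difference is minor: for the first entry the paper uses $\varepsilon<1/A$ with $A=2M_0^2+4M_1^2+4$ to get $\varepsilon(M_1^2+2)<1/2$, whereas you use $\varepsilon<(\lambda_0/(2C))^2$ with $C\ge A\ge4$ and $\delta^4<1/16$, which works equally well.
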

\begin{proof}
We have $\varepsilon<1/6$, $\delta\le\sqrt3/4$ and $R\ge\varepsilon$.
Since $u/(u+3)$ is increasing,
\[
 \frac{R}{R+3}\ge\frac{\varepsilon}{\varepsilon+3}
       >\frac{\varepsilon}{4},\qquad
 \delta^2+\frac34\le\frac{15}{16}<1.
\]
Thus the second entry in the minimum in \eqref{eq:refinedbound}
is larger than $\varepsilon^2\delta^4/64$.
On the other hand, $A=2M_0^2+4M_1^2+4$ and $\varepsilon<1/A$, so
\[
 \varepsilon(M_1^2+2)
 <\frac{M_1^2+2}{2M_0^2+4M_1^2+4}\le\frac12.
\]
Consequently the first entry is larger than $\varepsilon^2/4$,
and hence larger than $\varepsilon^2\delta^4/64$.
The result follows.
\end{proof}

\begin{theorem}\label{thm:explicitbound}
Use the fixed cutoffs above and put
\[
 \mathcal B=1+M_0+M_1,\qquad
 \delta=\frac1{16\mathcal B},\qquad
 \varepsilon=\frac1{2^{16}\mathcal B^{12}}.
\]
These choices satisfy all the disk parameter conditions.
Choose the smoothing width and the conjugation parameter by
\eqref{eq:width} and \eqref{eq:tauchoice}, respectively.
The resulting metric satisfies
\[
 \secg_g>\frac1{2^{54}\mathcal B^{28}}>0.
\]
\end{theorem}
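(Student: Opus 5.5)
The plan is to verify that the displayed choices of $\delta$ and $\varepsilon$ satisfy \eqref{eq:delta} and \eqref{eq:explicit-epsilon}. Then I apply Lemma~\ref{lem:simplebound}, which gives $\secg_g>\varepsilon^2\delta^4/64$ once the smoothing width and conjugation parameter are chosen by \eqref{eq:width} and \eqref{eq:tauchoice}. Substituting the choices gives exactly
\[
 \frac{\varepsilon^2\delta^4}{64}
 =\frac{1}{2^6}\cdot\frac{1}{2^{32}\mathcal B^{24}}\cdot\frac{1}{2^{16}\mathcal B^4}
 =\frac{1}{2^{54}\mathcal B^{28}},
\]
so the whole content is the admissibility check. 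Throughout I use $a=2\pi/3$, $c=1/2$, $F_a=\sqrt3/2$, $L=1$, $\Lambda=M_0^2+2M_1^2+2\le 2\mathcal B^2$, and $A=2\Lambda\le4\mathcal B^2$. I also use $C_\eta\le$ a fixed numerical constant for $\eta(v)=H(4v-1)$; since $\sup|H'|=2$ at the midpoint, $C_\eta=8$.

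\textbf{Step 1: the conditions on $\delta$.} With $\delta=1/(16\mathcal B)$:
\begin{itemize}
\item $\delta\le F_a/2$ is immediate.
\item $\delta^2\le 1/(16AF_a^2)$ reduces to $(256\mathcal B^2)^{-1}\le (48\mathcal B^2)^{-1}$ using $A F_a^2\le 3\mathcal B^2$.
\item $\delta^3\le 1/(8AF_a(1+C_\eta))$ reduces to $(4096\mathcal B^3)^{-1}\le (8\cdot4\mathcal B^2\cdot\tfrac{\sqrt3}{2}\cdot 9)^{-1}$. This holds since $\mathcal B\ge1$ and $4096\ge 250$.
\end{itemize}
I must double-check the value of $C_\eta$. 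If it is larger than expected, the factor $16$ still leaves ample room.

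\textbf{Step 2: the conditions on $\varepsilon$.} I check each entry of \eqref{eq:explicit-epsilon} against $\varepsilon=2^{-16}\mathcal B^{-12}$.
\begin{itemize}
\item The entries $1/6$ and $1/A\ge 1/(4\mathcal B^2)$ are trivial.
\item For $1/(AF_a\ell)$: $\ell=F_a+F_a^3/(3\delta^2)\le 1+64\mathcal B^2$, so $AF_a\ell\le 4\mathcal B^2\cdot 65\mathcal B^2$. This is at most $2^{9}\mathcal B^4$, which is fine.
\item For $(1/(2AF_a))^{2/3}\ge (8\mathcal B^2)^{-2/3}\ge 2^{-2}\mathcal B^{-2}$, also fine.
\item The binding entry is $(\lambda_0/(2C))^2$. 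Here $\lambda_0=(M_1^2+2)^{-1}\ge (2\mathcal B^2)^{-1}$. Bounding $C_1,C_2,C_3$ termwise with $M_i\le\mathcal B$ and $A\le 4\mathcal B^2$, the dominant term is $\tfrac14A^2\le 4\mathcal B^4$, and the others total at most a few dozen $\mathcal B^4$. So $C\le 2^6\mathcal B^4$, which gives $\lambda_0/(2C)\ge 2^{-8}\mathcal B^{-6}$ and $(\lambda_0/(2C))^2\ge 2^{-16}\mathcal B^{-12}$.
\end{itemize}
This last entry is exactly where the exponents $16$ and $12$ come from. Strictness is needed, so I should either prove $C<2^6\mathcal B^4$ strictly or note that a crude count gives, e.g., $C\le 40\mathcal B^4$.

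\textbf{Main obstacle.} The delicate step is this honest termwise count of $C_1,C_2,C_3$. The count must be strict so that $\varepsilon$ lies strictly below the minimum, and it must be consistent with the simplified normalization $A=2M_0^2+4M_1^2+4$ used in Lemma~\ref{lem:simplebound}. I would write out $C_2\le \tfrac14(16)+\tfrac32(4)+4\cdot3+4=26$ times $\mathcal B^4$, and similarly for $C_1$ and $C_3$. Once admissibility is confirmed, the remaining hypotheses are in place: $\kappa_*>0$ holds, $\zeta$ is chosen by \eqref{eq:width}, and $\tau$ by \eqref{eq:tauchoice}. Lemma~\ref{lem:simplebound} then finishes the proof, with the strict inequality inherited from \eqref{eq:finalcurvature}.
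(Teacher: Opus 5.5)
Your proposal is correct and is essentially the paper's proof: you verify \eqref{eq:delta} and every entry of \eqref{eq:explicit-epsilon} for the given $\delta,\varepsilon$, with the entry $(\lambda_0/(2C))^2$ being the tight one, and then substitute into Lemma~\ref{lem:simplebound} to obtain $\varepsilon^2\delta^4/64=2^{-54}\mathcal B^{-28}$. The only differences are in the constants: the paper proves the cruder bound $C_\eta\le32$ from $H'\le8$, whereas your exact value $C_\eta=8$ is true (e.g.\ via $\cosh^2x\ge1+x^2$) but not proved in your proposal, and the paper's majorant $C\le32\mathcal B^4$ gives $(\lambda_0/(2C))^2\ge2^{-14}\mathcal B^{-12}>\varepsilon$, which secures strictness with the same kind of slack as your count $C\le26\mathcal B^4$.
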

\begin{proof}
We check all the smallness conditions. First the fixed cutoff has
$C_\eta\le32$. To see this, for $0<t<1$ we have
\[
 H'(t)=H(t)(1-H(t))\bigl(t^{-2}+(1-t)^{-2}\bigr).
\]
If $0<t\le1/2$ and $y=1/t\ge2$, then
$H(t)(1-H(t))\le e^{2-y}$ and
$t^{-2}+(1-t)^{-2}\le2y^2$. Since $y^2e^{-y}$ is decreasing for
$y\ge2$, it follows that $H'(t)\le8$.
The other half follows from $H(1-t)=1-H(t)$.
As $\eta(v)=H(4v-1)$, we obtain $C_\eta\le32$.

From the definitions of $A,\lambda_0,C_1,C_2,C_3$,
\begin{equation}\label{eq:majorants}
 A\le4\mathcal B^2,\qquad
 \lambda_0\ge\frac1{2\mathcal B^2},\qquad
 C\le32\mathcal B^4.
\end{equation}
For clarity, the estimates for the three terms defining $C$ can be
obtained by $M_0,M_1\le\mathcal B$ and
$M_1+2M_0\le2\mathcal B$:
\[
 C_1\le\tfrac{53}{4}\mathcal B^4,\qquad
 C_2\le22\mathcal B^4,\qquad
 C_3\le15\mathcal B^4.
\]
The larger common bound in \eqref{eq:majorants} is convenient.

The three conditions in \eqref{eq:delta}, with $L=1$, follow from
\[
 \delta\le\frac1{16}<\frac{\sqrt3}{4},\qquad
 \delta^2=\frac1{256\mathcal B^2}
 <\frac1{48\mathcal B^2}\le\frac1{16AF_a^2},
\]
and
\[
 \delta^3=\frac1{4096\mathcal B^3}
 <\frac1{1056\mathcal B^2}
 \le\frac1{8AF_a(1+C_\eta)}.
\]
The northern length satisfies
\[
 \ell=F_a+\frac{F_a^3}{3\delta^2}
 \le1+\frac{256}{3}\mathcal B^2<128\mathcal B^2.
\]
Clearly $\varepsilon<1/6$ and $\varepsilon<1/A$.
The third and fourth entries in \eqref{eq:explicit-epsilon} are controlled by
\[
 \left(\frac{\lambda_0}{2C}\right)^2
 \ge\frac1{2^{14}\mathcal B^{12}}>\varepsilon,\qquad
 \frac1{AF_a\ell}>\frac1{2^9\mathcal B^4}>\varepsilon.
\]
For the last entry,
\[
 (2AF_a)^2\varepsilon^3
 \le\frac1{2^{42}\mathcal B^{32}}<1.
\]
This is equivalent to the required
$\varepsilon<(1/(2AF_a))^{2/3}$.
All the disk conditions are therefore satisfied. Substitution in
Lemma~\ref{lem:simplebound} gives
\[
 \frac{\varepsilon^2\delta^4}{64}
 =\frac1{2^{54}\mathcal B^{28}},
\]
which proves the theorem.
\end{proof}

\begin{remark}
The estimate is expressed in terms of data selected before the final
metric. Smoothness of the connection and compactness of its base imply
$M_0,M_1<\infty$. Thus the displayed constant is a strictly positive
number for every chosen input, even without numerical evaluation.
The smoothing width and conjugation parameter require additional
finite derivative bounds. They can be chosen after the disk parameters
and do not decrease the stated lower bound beyond the two factors
already accounted for in the proof.
\end{remark}

\section{The algebraic model and further remarks}\label{sec:remarks}
\subsection{Identification with a Brieskorn sphere}
The algebraic link
\[
 X=\{z_0^{383}+z_1^3+z_2^2+\cdots+z_6^2=0,\ |z|^2=1\}
 \subset\mathbb C^7
\]
is a homotopy eleven-sphere by the two isolated vertices in its Brieskorn
graph \cite{BGKT}. With its natural orientation, its Milnor-fiber
signature is obtained by counting
$u/383+v/3+5/2$, $1\le u\le382$, $v=1,2$: the counts in
$(2,3),(3,4),(4,5)$ are $63,638,63$, so its signature is $-512$.
The parallelizable filling gives $\mu(X)=2/31$, so $X=\Sigma_{384}$
and its orientation reverse is $-X=\Sigma_{-384}$. More generally,
$-X$ is represented by every parameter $k\equiv-384\pmod{5952}$.
Injectivity identifies $-X$ with the member of the family in
Theorem~\ref{thm:main}. An abstract diffeomorphism transports the metric,
actions, period and curvature lower bound. A coordinate formula for that
diffeomorphism is not supplied.

\subsection{The scope of the construction}
The result is a smooth parameterized realization. The chosen bundle,
trivializations and clutching representative have not been evaluated
numerically, and neither have $M_0,M_1,\zeta,\tau$ or $\kappa_*$.
The inequalities specify positive choices and a strict analytic lower
bound for every such choice; they are not a numerical interval certificate
for a list of coordinate coefficients. A polynomial expression for the metric in the algebraic coordinates is not supplied.
The proof supplies a center and a realization; it does not prove that
non-isometry itself creates the original positive margin or is indispensable.
The theorem concerns the 496 classes in the even subgroup of
$\Theta_{11}$; it makes no assertion about the other 496 classes.

The accompanying exact computations check characteristic-number arithmetic,
connection components, collar identities, and the incoming Lie derivatives.
They supplement the written global proof and are not a formal verification
of the geometric or topological theorems.

The existence of a positive center also makes local inverse questions
meaningful. For a fixed background $q$, however, the three-square
tensor in \eqref{eq:main} has rank at most three. At a free-orbit point
on our eleven-dimensional manifold, it is on the boundary of the cone
of positive semidefinite tensors. Arbitrarily small changes in the
inverse metric can therefore leave that cone. The three-action formula
with this same background does not by itself imply realization of
every nearby metric. Such a neighborhood statement requires further
fields or a change in the background. It is not used in the proof above.

The construction also distinguishes two uses of symmetry. The
commuting isometric actions on the source bundles permit the
submersion calculations. The final non-isometric actions arise by
conjugation on the already constructed closed manifold. Their
periodicity follows from the group law, while their non-isometry is
proved by \eqref{eq:noniso}. These are different statements,
and neither one follows from the positivity of the inverse metric alone.

\section*{Acknowledgment}
This paper is partially supported by NSFC (Grant Nos. 12131012 and 12301032), Fundamental Research Funds for the Central Universities (Grant No. 4007012402), and Zhishan Scholars Programs of Southeast University (Grant No. 2242024RCB0039).
The authors are grateful to ChatGPT 6 Astra for valuable assistance with the exploration of some of the proof strategies and calculations. We take responsibility for the human verification and exposition.

\Needspace{12\baselineskip}


\begin{thebibliography}{99}\raggedright
\bibitem{EK}
J. Eells, Jr. and N. H. Kuiper,
\emph{An invariant for certain smooth manifolds},
Ann. Mat. Pura Appl. (4) \textbf{60} (1962), 93--110.
\href{https://doi.org/10.1007/BF02412768}{doi:10.1007/BF02412768}.
\bibitem{KM}
M. A. Kervaire and J. W. Milnor,
\emph{Groups of homotopy spheres: I},
Ann. of Math. (2) \textbf{77} (1963), 504--537.
\href{https://doi.org/10.2307/1970128}{doi:10.2307/1970128}.
\bibitem{Toda}
H. Toda, \emph{Composition Methods in Homotopy Groups of Spheres},
Annals of Mathematics Studies 49, Princeton University Press, 1962.
\bibitem{Speranca}
L. D. Speran\c ca,
\emph{Pulling back the Gromoll--Meyer construction and models of exotic spheres},
\href{https://arxiv.org/abs/1010.6039}{arXiv:1010.6039v3}, 2014.
\bibitem{HLY}
Y.-H. He, Z. Liu and S.-T. Yau,
\emph{Positive sectional curvature on all smooth 7-spheres},
\href{https://arxiv.org/abs/2609.29426v1}{arXiv:2609.29426v1},
September 24, 2026, preprint.
\bibitem{ONeill}
B. O'Neill, \emph{The fundamental equations of a submersion},
Michigan Math. J. \textbf{13} (1966), 459--469.
\href{https://doi.org/10.1307/mmj/1028999604}{doi:10.1307/mmj/1028999604}.
\bibitem{RW}
P. Reiser and D. J. Wraith,
\emph{A generalization of the Perelman gluing theorem and applications},
\href{https://arxiv.org/abs/2308.06996v2}{arXiv:2308.06996v2}, 2024.

\bibitem{BGKT} C. P. Boyer, K. Galicki, J. Koll\'ar and E. Thomas,
\emph{Einstein metrics on exotic spheres in dimensions 7, 11, and 15},
Experiment. Math. \textbf{14}, no. 1 (2005), 59--64.
\href{https://doi.org/10.1080/10586458.2005.10128907}{doi:10.1080/10586458.2005.10128907}.
\end{thebibliography}
\end{document}